\documentclass[11pt,letterpaper,reqno]{amsart}
\usepackage{tikz}
\usetikzlibrary{positioning, shapes.geometric, arrows.meta}
\usepackage{amssymb}
\usepackage{amsmath}
\usepackage{amsthm}
\usepackage{amsfonts}
\usepackage{bbm}
\usepackage{enumitem} 
\usepackage{pgfplots}
\pgfplotsset{compat=1.18} 
\usepackage{booktabs}

\usepackage{graphicx}
\usepackage[T1]{fontenc}
\usepackage{doi}
\usepackage{float} 
\usepackage{tikz}
\usepackage{pgfplots}
\pgfplotsset{compat=1.18}
\usetikzlibrary{arrows.meta}

\hypersetup{
	colorlinks=true,
	linkcolor=red,       
	citecolor=green,     
	urlcolor=magenta,    
	filecolor=magenta,
	linktoc=all,
	pdfstartview={FitH}
}

\newtheorem{thm}{Theorem}[section]
\newtheorem{lem}[thm]{Lemma}
\newtheorem{prop}[thm]{Proposition}
\newtheorem{cor}[thm]{Corollary}

\theoremstyle{definition}

\newtheorem{defin}[thm]{Definition}

\theoremstyle{remark}
\newtheorem{rem}[thm]{Remark}
\numberwithin{equation}{section}

\makeatother

\newcommand{\R}{\mathbb{R}} \newcommand{\E}{\mathbb{E}} \newcommand{\Cov}{\operatorname{Cov}} \newcommand{\tr}{\operatorname{tr}} \newcommand{\one}{\mathbf{1}} \newcommand{\norm}[1]{\left\lVert #1\right\rVert}
\newcommand{\cK}{\mathcal{K}}
\newcommand{\rank}{\operatorname{rank}}
\newcommand{\diag}{\operatorname{diag}}
\newcommand{\osc}{\operatorname{osc}}
\newcommand{\adj}{\operatorname{adj}}
\newcommand{\Span}{\operatorname{span}}
\allowdisplaybreaks

\begin{document}

\title[The \(A\)-optimal design problem and the \(S\)-matrix conjecture]
{The \(A\)-optimal design problem and the \(S\)-matrix conjecture}

\author[T.~Zhang]{Teng Zhang}

\address{School of Mathematics and Statistics, Xi'an Jiaotong University, Xi'an 710049, P. R. China}
\email{teng.zhang@stu.xjtu.edu.cn}

\subjclass[2020]{Primary 15A60; Secondary 15A45, 05B20, 62K05}

\keywords{$S$-matrix conjecture; Frobenius norm; inverse matrix; nonnegative matrix; Hadamard matrix; Moore--Penrose inverse; $A$-optimal design}

\begin{abstract}
	The \(S\)-matrix conjecture was formulated by Sloane and Harwit in
	1976, motivated by an \(A\)-optimal design problem arising in
	spectroscopy. It states that if \(A\) is a nonsingular real
	\(n\times n\) matrix whose entries lie in \([0,1]\), then
	\[
	\left\|A^{-1}\right\|_F
	\ge
	\frac{2n}{n+1}.
	\]
	Moreover, equality holds if and only if \(A\) is an \(S\)-matrix. In this paper, we prove the conjecture by combining a variational trace inequality in odd dimensions with a sharp range-constrained estimate for the Moore--Penrose inverse of a centered matrix in even dimensions.
\end{abstract}

\maketitle

\section{Introduction} \label{sec:introduction} 

Optimal experimental design concerns the choice of observations that estimate unknown parameters as accurately as possible under structural, physical, or economic constraints. In a linear model, the quality of a design is encoded by its information matrix, and different scalar summaries of the inverse information matrix lead to different optimality criteria. Among the classical criteria is $A$-optimality, which minimizes the sum of the variances of the estimated parameters.

Consider the saturated linear model 
\begin{equation*} 
	Y=A\theta+\varepsilon, \qquad \E(\varepsilon)=0, \qquad \Cov(\varepsilon)=\sigma^2 I_n,
\end{equation*} where \(A\in\R^{n\times n}\) is nonsingular and \(\theta\in\R^n\) is unknown. The least-squares estimator is 
$\widehat{\theta}=A^{-1}Y$, and \[ \Cov(\widehat{\theta}) =\sigma^2 A^{-1}A^{-\top} =\sigma^2(A^\top A)^{-1}. \] Consequently, 
\begin{equation} 
	\E\norm{\widehat{\theta}-\theta}_2^2 =\sigma^2\tr\bigl((A^\top A)^{-1}\bigr) =\sigma^2\norm{A^{-1}}_{F}^2. \label{eq:Acriterion} 
\end{equation}
Thus, in the saturated setting, minimizing the Frobenius norm of \(A^{-1}\) is exactly the \(A\)-optimal design problem. Equivalently, it minimizes the total mean squared error of the least-squares estimator; after division by \(n\), it minimizes the average variance of its coordinates.

For \(A=(a_{ij}),B=(b_{ij})\in\mathbb{R}^{n\times n}\), the Frobenius inner product is defined by
\[
\langle A,B\rangle
:=
\operatorname{tr}(B^\top A)=\sum_{i=1}^n\sum_{j=1}^na_{ij}b_{ij}.
\]
The induced Frobenius norm is
$
\lVert A\rVert_{F}
:=
\sqrt{\langle A,A\rangle}
=
\left(
\sum_{i=1}^{n}\sum_{j=1}^{n}a_{ij}^{2}
\right)^{1/2}.
$

The signed design problem originates in Hotelling's classical work on
chemical-balance weighing experiments in 1944 \cite{Hot44}. In such an experiment,
an object may be placed on either side of the balance, so the entries of the
design matrix \(A=(a_{ij})\) naturally take both positive and negative values.
More generally, suppose that
\[
-1\le a_{ij}\le  1,
\qquad 1\leq i,j\leq n.
\]
The Cauchy--Schwarz inequality gives
\[
n^2
=
\bigl\langle A^\top,A^{-1}\bigr\rangle^{2}
\leq
\lVert A^\top\rVert_{F}^{2}
\lVert A^{-1}\rVert_{F}^{2}
\leq
n^2\lVert A^{-1}\rVert_{F}^{2}.
\]
Consequently,
\begin{equation}
	\lVert A^{-1}\rVert_{F}\geq 1.
	\label{eq:signedbound}
\end{equation}
Equality holds if and only if \(A\) is a \emph{Hadamard matrix}, that is,
$
A\in\{-1,1\}^{n\times n},
$ and $
A^\top A=nI_n.
$
In this case,
\[
\Cov(\widehat{\theta})
=
\frac{\sigma^2}{n}I_n,
\]
so all parameters are estimated with the same variance, and their estimation
errors are uncorrelated. Thus, Hadamard matrices provide optimal signed
weighing designs; see also \cite{HS79}.

The admissible entries of a design matrix, however, depend on the underlying
experimental mechanism. In coded spectroscopy, an aperture or spectral channel
is typically closed, open, or partially transmitting. Negative transmission
coefficients are therefore not physically available, and the natural
constraint becomes
\begin{equation}
	0\leq a_{ij}\leq 1,\qquad 1\leq i,j\leq n.
	\label{eq:box}
\end{equation}
This nonnegativity constraint fundamentally changes the design problem:
For \(n>1\), this nonnegativity constraint excludes Hadamard matrices;
indeed, no nonnegative matrix can attain equality in the signed bound
\eqref{eq:signedbound}. Determining the sharp
lower bound for \(\lVert A^{-1}\rVert_{F}\) under
\eqref{eq:box} leads to the \(S\)-matrix conjecture.

The corresponding problem under the nonnegativity constraint \eqref{eq:box} arose in the study of masks for Hadamard-transform spectrometers and image scanners. It was formulated by Sloane and Harwit \cite{SH76} and developed
further in \cite{HS79,Slo79}. The candidate extremizers are obtained from Hadamard matrices. Let \(\one\in\mathbb{R}^n\) be the all-ones vector.
Let \(H\) be a normalized Hadamard matrix of order \(n+1\), written in
the form
\[
H=
\begin{pmatrix}
	1 & \one^{\top}\\
	\one & C
\end{pmatrix},
\qquad
C\in\{-1,1\}^{n\times n}.
\]
The matrix
\[
S=\frac12\bigl(\one\one^{\top}-C\bigr)
\]
is a \(0\)--\(1\) matrix and is called an \emph{\(S\)-matrix}. Clearly, the existence of an \(S\)-matrix of order \(n\) is
equivalent to the existence of a Hadamard matrix of order \(n+1\). Since a
Hadamard matrix of order \(N>2\) can exist only when \(N\) is divisible
by \(4\), a nontrivial \(S\)-matrix can exist only when
\[
n\equiv3\pmod4;
\]
the exceptional case \(n=1\) corresponds to the Hadamard matrix of
order \(2\). The famous Hadamard conjecture \cite[Conjecture~1]{Zha08} asserts that a Hadamard matrix of
order \(4m\) exists for every positive integer \(m\), or equivalently,
that an \(S\)-matrix of order \(4m-1\) exists for every positive
integer \(m\). Hadamard matrices are currently known to exist for every
admissible order at most \(664\), while \(668\) is the smallest
unresolved order. More specifically, among the admissible orders not
exceeding \(1208\), the only unresolved cases are
\[
668,\qquad716,\qquad892,\qquad1132;
\]
see \cite{CP25}.

Equivalently, a matrix \(S\in\{0,1\}^{n\times n}\) is an
\(S\)-matrix if and only if it satisfies either, and hence both, of
the Gram identities
\begin{equation}
	S^{\top}S
	=
	\frac{n+1}{4}
	\bigl(I_n+\one\one^{\top}\bigr),
	\qquad\text{or equivalently}\qquad
	SS^{\top}
	=
	\frac{n+1}{4}
	\bigl(I_n+\one\one^{\top}\bigr).
	\label{eq:Sgram}
\end{equation}
This characterization is standard in the theory of Hadamard designs;
see \cite[Sec.~V.1]{CK07}. The two Gram identities are also recorded
explicitly in \cite[p.~51]{KM08}.

Since 
$
\bigl(I_n+\one\one^\top\bigr)^{-1} =I_n-\frac{1}{n+1}\one\one^\top, 
$ we obtain 
\begin{equation} 
	\norm{S^{-1}}_{F}^2 =\tr\bigl((S^\top S)^{-1}\bigr) =\frac{4n^2}{(n+1)^2}, \qquad \norm{S^{-1}}_{F}=\frac{2n}{n+1}. \label{eq:Svalue} 
\end{equation}

Motivated by this construction, Sloane and Harwit \cite{SH76} conjectured that the value in \eqref{eq:Svalue} is the universal lower bound for every nonsingular matrix satisfying \eqref{eq:box}, and that equality characterizes the \(S\)-matrices. This assertion is now known as the \emph{\(S\)-matrix conjecture},  which was explicitly recorded 
in Zhan's remarkable problem collection, ``\emph{Open Problems in Matrix Theory}'', as \cite[Conjecture~11]{Zha08}. The main result of this paper is to prove this conjecture.

\begin{thm}[$S$-matrix conjecture]\label{thm:main} Let  \(A\in\R^{n\times n}\) be nonsingular with 
	$ 0\leq a_{ij}\leq 1,$ $1\leq i,j\leq n. $
	Then \begin{equation} 
		\norm{A^{-1}}_{F}\geq \frac{2n}{n+1}. \label{eq:mainbound} \end{equation} Equality holds if and only if \(A\) is an \(S\)-matrix. 
\end{thm}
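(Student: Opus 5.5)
The plan is to pass to the $\pm1$ picture.
Write $A=\tfrac12(J-C)$ with $J=\one\one^\top$ and $C=J-2A$, so that $C$ has entries in $[-1,1]$. Then bound $\norm{A^{-1}}_F$ by a duality argument in the spirit of the Cauchy--Schwarz proof of \eqref{eq:signedbound}. For any test matrix $X$, $\langle A^{-1},X\rangle\le\norm{A^{-1}}_F\norm{X}_F$. Hence it suffices to find $X$ with $\langle A^{-1},X\rangle/\norm{X}_F\ge 2n/(n+1)$.

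The natural choice, guided by the $S$-matrix case, is $X=A^\top-t J$ for a parameter $t$. For an $S$-matrix one has
\[
S^{-1}=\tfrac{2}{n+1}\bigl(2S^\top-J\bigr),
\]
so $t=\tfrac12$ gives exact proportionality. Then
\[
\langle A^{-1},A^\top-tJ\rangle=n-t\,\one^\top A^{-1}\one,
\qquad
\norm{A^\top-tJ}_F^2=\norm{A}_F^2-2t\,\one^\top A\one+t^2n^2.
\]
The two unknown scalars are $s=\one^\top A^{-1}\one$ and the entry statistics of $A$. I would optimize over $t$ (it may be negative when $s$ is large) and also combine this with the trivial constraints $a_{ij}^2\le a_{ij}$, which give $\norm{A}_F^2\le\one^\top A\one=:m$.

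Optimizing $t$ gives a lower bound of the form
\[
\norm{A^{-1}}_F^2\ge \frac{(n-ts)^2}{m-2tm+t^2n^2}.
\]
This has to be combined with an independent inequality linking $s$ and $m$. A Cauchy--Schwarz step gives $n^2=\langle A^{-1}\one,\,A\cdot\rangle$-type identities, for instance
\[
s\cdot \one^\top A A^\top\one\ge n^2.
\]
Together with $\one^\top AA^\top\one\le nm$, this yields a two-variable problem in $(s,m)$.

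Here the parity phenomenon mentioned in the abstract enters. For odd $n$, I expect the worst case to lie at $m\approx n(n+1)/2$, where the bound closes to $4n^2/(n+1)^2$ via a variational trace inequality. For even $n$, however, the extremal quantities are not attainable. Treating $A^{-1}$ through its component orthogonal to $\one$ then requires a projected (centered) version. I would set $P=I-\tfrac1n J$ and work with $PA P$ and its Moore--Penrose inverse. The key estimate needed is a sharp lower bound for $\norm{(PAP)^+}_F$ on the range $\one^\perp$, in terms of the entry box constraint. The remaining rank-one part, along $\one$, is then handled by $s$.

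For equality, each Cauchy--Schwarz step must be tight. Tightness forces $a_{ij}^2=a_{ij}$, so $A$ is $0$--$1$. It also forces $A^{-1}$ to be proportional to $2A^\top-J$, which gives the Gram identity \eqref{eq:Sgram}. Hence $A$ is an $S$-matrix, while the converse is \eqref{eq:Svalue}.

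\textbf{Main obstacle.} The hard step is the reduced two-parameter inequality. In particular, in even dimensions, no $S$-matrix exists and the naive bound is not sharp, so a separate centered estimate must still beat $2n/(n+1)$. I would first try to prove that estimate by a direct eigenvalue argument on the restriction of $A$ to $\one^\perp$.
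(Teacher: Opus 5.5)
There is a genuine gap at both load-bearing steps.

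First, your test matrix $X=A^\top-tJ$ makes $\langle A^{-1},X\rangle=n-ts$ depend on $s=\one^\top A^{-1}\one$. Nothing controls $s$. The inequality you propose for it, $s\,\one^\top AA^\top\one\ge n^2$, is false. Since $A^{-1}$ is not positive definite, no such Cauchy--Schwarz step is available, and $s$ can even be negative: $A=\begin{pmatrix}1/2&1\\1/10&1/2\end{pmatrix}$ gives $s=-2/3$.

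Without a sharp constraint on $s$, the method cannot close. The best bound from any test matrix in $\Span\{A^\top,J\}$ is the squared norm of the projection of $A^{-1}$ onto that plane. At $s=n\,\one^\top A\one/\|A\|_F^2$ this equals only $n^2/\|A\|_F^2$, which is below $4n^2/(n+1)^2$ whenever $\|A\|_F^2>(n+1)^2/4$.

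The missing idea, already for odd $n$, is to take $X=A^\top Q$ with $Q=I_n-\frac1{n+1}J_n$.
\begin{itemize}
\item On $S$-matrices this coincides with your $t=\frac12$, since all column sums equal $\frac{n+1}{2}$.
\item In general $\langle A^{-1},A^\top Q\rangle=\tr Q=\frac{n^2}{n+1}$ is independent of $A$, so $s$ never appears.
\item The bound $\|A^\top Q\|_F=\|QA\|_F\le\frac n2$ follows column by column from $\max_{x\in[0,1]^n}\|Qx\|_2^2=\frac n4$ for odd $n$ (Lemma~\ref{lem:odd-cube}).
\end{itemize}
This is exactly the paper's odd case. Its equality analysis forces $0$--$1$ entries and $AA^\top=\frac{n+1}4(I_n+J_n)$.

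Second, for even $n$ that cube maximum is strictly larger than $n/4$. At $n/2$ ones it already equals $\frac n4+\frac{n}{4(n+1)^2}$. So a genuinely new estimate is required, and your proposal only says you would try an eigenvalue argument. That estimate is the heart of the paper.

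The paper uses the \emph{one-sided} centering $D=2AP$. This matrix always has rank exactly $n-1$, and its rows have oscillation at most $2$. For such $D$ and even $n\ge4$, one needs $\|D^\dagger\|_F^2\ge\frac{n-1}{n+1}$ (Theorem~\ref{thm:centered}). For $n=4$ this is proved by a polyhedral reduction. For $n\ge6$ it uses a frame-potential functional: rowwise second variation forces any negative minimizer onto projected sign matrices, where parity (even column inner products $N_{jk}$) supplies the gap.

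Your two-sided $PAP$ is the wrong object, because its rank is not controlled. For example, $A=\begin{pmatrix}1&1/2\\1/2&0\end{pmatrix}$ is nonsingular with entries in $[0,1]$, yet $PAP=0$. So no uniform lower bound on $\|(PAP)^\dagger\|_F$ can hold.

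Finally, the leftover one-dimensional part is not ``handled by $s$''. The paper needs three further ingredients:
\begin{itemize}
\item the exact Frobenius-orthogonal decomposition \eqref{eq:norm-decomposition}, with $t=u^\top\mu$ and $u$ spanning $\ker D^\top$;
\item the energy identity $n^2-\|D\|_F^2=x^2+4E$ together with $\|\mu\|_2\le\frac{n+x}{2\sqrt n}$, used with a case split at $x=1$;
\item a separate argument for $n=2$ (Lemma~\ref{lem:n-two}).
\end{itemize}
As written, your proposal is a plan whose key inequality in the odd case is false, and whose even-case estimate is both missing and posed for a matrix on which it fails.
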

The \(S\)-matrix conjecture has a long history in optimal design and matrix analysis,
which may be
summarized chronologically as follows:
\begin{itemize}
	\item In 1960, Kiefer and Wolfowitz \cite{KW60} established an
	equivalence theorem for optimal experimental design. Kiefer
	\cite{Kie74} subsequently developed the general equivalence theory in
	1974. These results later provided the principal design-theoretic
	framework for studying the \(S\)-matrix problem. The conjecture itself
	was formulated by Sloane and Harwit \cite{SH76} in 1976 and was
	developed further in \cite{HS79,Slo79}.
	
	\item In 1987, Cheng  \cite{Che87} applied the Kiefer--Wolfowitz equivalence theory to
	prove the conjectured bound when \(n\) is odd. For even
	\(n\), he obtained the weaker estimate
	\[
	\norm{A^{-1}}_{F}
	>
	\frac{2\sqrt{n^2-2n+2}}{n}.
	\]

	\item In 2010, Zou, Jiang, and Hu \cite{ZJH10} established the conjectured bound for a
	spectrally restricted class of positive definite matrices.
	
	\item In 2012, Zou  \cite{Zou12} proved the conjectured estimate under an additional
	Frobenius-norm hypothesis.
	In the same year, Hu  \cite{Hu12} established the conjecture for positive-definite
	matrices.
	
	\item In 2013, Drnov\v{s}ek  \cite{Drn13} gave a short proof of Cheng's odd-dimensional
	result and even-dimensional bound using the Cauchy--Schwarz inequality
	and elementary optimization.
	
	\item In 2025, Frankel and Urschel 	\cite{FU25} proved the 
	conjecture for every \(n\geq 1000\). Their proof is based on a quantitative near-integrality argument.
	
\end{itemize}

\eqref{eq:mainbound} is equivalent to 
\begin{equation} 
	\tr\bigl((A^\top A)^{-1}\bigr) \geq \frac{4n^2}{(n+1)^2}. \label{eq:mainA} 
	\end{equation} 
Combining \eqref{eq:Acriterion} and \eqref{eq:mainA}, we obtain
\[
\E\norm{\widehat{\theta}-\theta}_2^2
\ge
\frac{4n^2}{(n+1)^2}\sigma^2,
\]
and hence
\[
\frac1n\sum_{j=1}^n
\operatorname{Var}(\widehat{\theta}_j)
\ge
\frac{4n}{(n+1)^2}\sigma^2.
\]
Whenever an \(S\)-matrix of order \(n\) exists, it is therefore an
\(A\)-optimal saturated design among all matrices with entries in
\([0,1]\). In that case, the binary design problem and its continuous
relaxation have the same optimal value. If no \(S\)-matrix of order
\(n\) exists, Theorem~\ref{thm:main} shows that the lower bound
\eqref{eq:mainbound} is not attained; it does not determine the exact
optimal value or compare the best binary design with the best
fractional design in that dimension.

At the level of the two universal lower bounds proved above, the
signed problem satisfies
\[
\norm{A^{-1}}_F^2\ge1
\]
by \eqref{eq:signedbound}, whereas the nonnegative problem satisfies
\[
\norm{A^{-1}}_F^2
\ge
\frac{4n^2}{(n+1)^2},
\]
whose right-hand side converges to \(4\) as \(n\to\infty\). This is a
comparison of the two lower-bound expressions, not necessarily of two
simultaneously attainable optima in the same dimension.

	Theorem~\ref{thm:main} also admits the following homogeneous formulation. 
	
	\begin{cor}
	Let  \(A=(a_{ij})\in\R^{n\times n}\) be nonsingular and 	 nonnegative. Let $\norm{A}_{\max}=\max_{i,j}a_{ij}$. Then
		\begin{equation*} 
			\norm{A^{-1}}_{F} \geq \frac{2n}{n+1}\norm{A}_{\max}^{-1}.
		\end{equation*} 
		Equality holds if and only if  \(A\) is a positive scalar multiple of an \(S\)-matrix.
	\end{cor}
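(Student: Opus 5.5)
The corollary follows from Theorem~\ref{thm:main} by rescaling. Let \(A\) be nonsingular and nonnegative, and set \(m=\norm{A}_{\max}\). First, \(m>0\): otherwise \(A=0\), which is singular. Define \(B=m^{-1}A\). Every entry of \(B\) satisfies \(0\le b_{ij}\le 1\), and \(B\) is nonsingular. Theorem~\ref{thm:main} therefore gives
\[
\norm{B^{-1}}_F\ge \frac{2n}{n+1}.
\]
Since \(B^{-1}=mA^{-1}\), homogeneity of the Frobenius norm gives
\[
\norm{B^{-1}}_F = m\norm{A^{-1}}_F.
\]
Dividing by \(m>0\) yields the stated inequality
\[
\norm{A^{-1}}_F\ge \frac{2n}{n+1}\norm{A}_{\max}^{-1}.
\]

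For the equality case, equality for \(A\) holds exactly when \(\norm{B^{-1}}_F=2n/(n+1)\). By the equality clause of Theorem~\ref{thm:main}, this happens exactly when \(B\) is an \(S\)-matrix, and then \(A=mB\) is a positive multiple of an \(S\)-matrix.

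Conversely, suppose \(A=cS\) with \(c>0\) and \(S\) an \(S\)-matrix. We must check that \(\norm{A}_{\max}=c\), i.e.\ that \(S\) has at least one entry equal to \(1\). This holds because \(S\) is a nonsingular \(0\)--\(1\) matrix, so \(S\neq 0\). Alternatively, the Gram identity \eqref{eq:Sgram} shows that every column of \(S\) has \((n+1)/2\ge 1\) ones. Then \(B=m^{-1}A=S\), and \eqref{eq:Svalue} together with homogeneity gives equality.

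There is no real obstacle here; the only point needing care is confirming \(\norm{A}_{\max}=c\) in the converse direction, so that the normalized matrix \(B\) is exactly \(S\).
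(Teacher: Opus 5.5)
Your proposal is correct and is the rescaling argument the paper intends when it presents this corollary as the homogeneous form of Theorem~\ref{thm:main}; the paper gives no separate proof. You normalize by \(m=\norm{A}_{\max}>0\), apply the theorem to \(m^{-1}A\), and use \(\norm{(m^{-1}A)^{-1}}_F=m\norm{A^{-1}}_F\). Your check that \(\norm{cS}_{\max}=c\), because a nonsingular \(0\)--\(1\) matrix has an entry equal to \(1\), is the one detail the converse direction of the equality statement needs.
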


\noindent\textbf{Sketch of the proof.}
The proof has three components.  First, in odd dimensions a fixed
matrix \(Y\) supplies a variational lower inequality for
\(\tr\bigl((AA^{\mathsf T})^{-1}\bigr)\).  The inequality is sharp exactly on the
middle Hamming layer of the cube, and its equality conditions recover
Hadamard orthogonality.  Second, for even \(n\) we prove the centered
estimate
\[
 \|D^\dagger\|_F^2\ge\frac{n-1}{n+1}
\]
for every rank-\((n-1)\) matrix \(D\) whose rows sum to zero and have
range at most \(2\).  The case \(n=4\) is polyhedral.  For \(n\ge6\), a
frame-potential functional has no negative minimum: rowwise second
variation forces any hypothetical minimizer onto projected vertices of
the cube, where parity supplies the missing gap.  Finally, an exact formula for \(A^{-1}\) separates its centered part from the one-dimensional direction lost under row centering.

\vspace{0.1in}
\noindent\textbf{Organization of the paper.}
In Section~\ref{s:2}, we give a simple proof of
Theorem~\ref{thm:main} in odd dimensions by combining a variational trace
inequality with a sharp maximization problem over the unit cube, and we
determine the corresponding equality cases. In Section~\ref{s:3}, we establish
a range-constrained estimate for the Moore--Penrose inverse of a
centered matrix. The case $n=4$ is treated through a
3-dimensional polyhedral reduction, while the case of even
\(n\ge6\) is handled using an inequality for projected sign matrices, a
rowwise second-variation argument, and a spectral-variance estimate.
In Section~\ref{s:4}, we prove the even case in Theorem~\ref{thm:main}  by means of an exact
inverse decomposition and a centered energy identity. We also treat the
case \(n=2\) directly and complete the proof of Theorem~\ref{thm:main}.

\vspace{0.1in}
\noindent\textbf{Acknowledgments.} 
The author thanks Professor Minghua Lin for introducing this problem to him and for the valuable discussions.
 He is also deeply grateful to Professor Zongben Xu for his kind support and concern regarding both his academic work and personal life.
This work was supported by the China Scholarship Council, the Young Elite Scientists Sponsorship Program for PhD Students (China Association for Science and Technology), and the Fundamental Research Funds for the Central Universities at Xi'an Jiaotong University (Grant No.~xzy022024045).

\section{The odd case}\label{s:2}

In this section, we give a simple proof of the odd-dimensional case of
Theorem~\ref{thm:main}.

We begin with a variational inequality for the inverse trace. The following completion-of-squares inequality will be applied to \(G=AA^{\mathsf T}\) with a suitably chosen symmetric matrix \(Y\).

\begin{lem}\label{lem:trace-inequality}
If \(G\in\R^{n\times n}\) is positive definite and
\(Y\in\R^{n\times n}\) is symmetric, then
\begin{equation*}
 \tr G^{-1}\ge 2\tr Y-\tr(YGY).
\end{equation*}
Equality holds if and only if \(Y=G^{-1}\).
\end{lem}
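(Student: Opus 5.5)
The plan is to complete the square with respect to the positive definite matrix \(G\). Put \(E:=Y-G^{-1}\), which is symmetric because both \(Y\) and \(G^{-1}\) are. We then expand \(\tr(EGE)\) directly:
\[
\tr(EGE)=\tr(YGY)-\tr(YGG^{-1})-\tr(G^{-1}GY)+\tr(G^{-1}GG^{-1})
=\tr(YGY)-2\tr Y+\tr G^{-1}.
\]
Rearranging gives the exact identity
\[
\tr G^{-1}-\bigl(2\tr Y-\tr(YGY)\bigr)=\tr(EGE).
\]
So the whole statement reduces to showing that \(\tr(EGE)\ge0\), with equality only when \(E=0\).

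For the inequality, write \(G=G^{1/2}G^{1/2}\) with \(G^{1/2}\) symmetric positive definite. Since \(E\) is symmetric,
\[
\tr(EGE)=\tr\bigl((G^{1/2}E)^{\top}(G^{1/2}E)\bigr)=\norm{G^{1/2}E}_F^2\ge0.
\]

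For the equality case, equality forces \(G^{1/2}E=0\). Because \(G^{1/2}\) is invertible, this gives \(E=0\), that is \(Y=G^{-1}\). Conversely, if \(Y=G^{-1}\), both sides of the inequality equal \(\tr G^{-1}\).

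No step is a genuine obstacle. The only points that need care are the symmetry of \(E\), which is what lets us write \(\tr(EGE)\) as a Frobenius norm, and the invertibility of \(G^{1/2}\), which is what gives the equality characterization.
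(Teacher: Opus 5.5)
Your proposal is correct and is essentially the paper's proof: the paper also completes the square, writing $\tr G^{-1}-2\tr Y+\tr(YGY)=\bigl\|G^{1/2}(Y-G^{-1})\bigr\|_F^2$, and reads off the equality case from the invertibility of $G^{1/2}$. You simply spell out the expansion and the role of the symmetry of $Y-G^{-1}$ that the paper leaves implicit.
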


\begin{proof}
Notice that
\[
\tr G^{-1}-2\tr Y+\tr(YGY)=\bigl\|G^{1/2}(Y-G^{-1})\bigr\|_F^2.
\]

\end{proof}

To make the trace inequality effective, we next identify the sharp maximum of the quadratic form induced by the matrix \(Q\) over the unit cube. In odd dimensions, the maximizers lie precisely on a single Hamming layer, which will later determine the equality cases.

Let \(I_n\) denote the \(n\times n\) identity matrix, let \(\one\in\mathbb{R}^n\) be the all-ones vector, and put $ J_n=\one\one^{\mathsf T}$.
\begin{lem}\label{lem:odd-cube}
	Suppose that \(n\) is odd, and define
$
	Q=I_n-1/(n+1)J_n.
$
	Then
	\begin{equation*}
		\max_{x\in[0,1]^n}\|Qx\|_2^2=\frac n4.
	\end{equation*}
Equality holds if and only if \(x\in\{0,1\}^n\) and
$
\one^{\mathsf T}x=\frac{n+1}{2}.
$
\end{lem}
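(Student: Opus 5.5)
Since \(Q\) is symmetric and \(J_n^2=nJ_n\), we have
\[
Q^2=I_n-\frac{2}{n+1}J_n+\frac{n}{(n+1)^2}J_n=I_n-\frac{n+2}{(n+1)^2}J_n .
\]
Hence, writing \(s=\one^{\mathsf T}x\),
\[
\|Qx\|_2^2=\|x\|_2^2-\frac{n+2}{(n+1)^2}s^2 .
\]
The plan has two steps: first reduce to the binary case by convexity, then optimize over the Hamming weight.

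For the reduction, observe that \(x\mapsto\|Qx\|_2^2\) is a convex quadratic, so its maximum over the cube \([0,1]^n\) is attained at a vertex. For the equality characterization we need slightly more: strict convexity along coordinate directions. Fix all coordinates except \(x_i\). Then \(\|Qx\|_2^2\) is a quadratic in \(x_i\) with leading coefficient \((Q^2)_{ii}=1-\frac{n+2}{(n+1)^2}>0\) for \(n\ge1\). A strictly convex function of one variable on \([0,1]\) attains its maximum only at an endpoint. So if some maximizer had a coordinate \(x_i\in(0,1)\), moving \(x_i\) to one of the endpoints \(0\) or \(1\) would strictly increase the value, which is a contradiction. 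Therefore every maximizer lies in \(\{0,1\}^n\).

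On \(\{0,1\}^n\) we have \(\|x\|_2^2=s\), so the objective becomes
\[
f(s)=s-\frac{n+2}{(n+1)^2}s^2,\qquad s\in\{0,1,\dots,n\}.
\]
This is a concave parabola with vertex at \(s^*=\frac{(n+1)^2}{2(n+2)}\). Completing the square about \(s=\frac{n+1}{2}\) gives
\[
f\!\left(\tfrac{n+1}{2}+t\right)=\frac{n(n+1)}{4(n+2)}-\frac{n+2}{(n+1)^2}\Bigl(t-\frac{n+1}{2(n+2)}\Bigr)^2+\frac{1}{4(n+2)} ,
\]
which can be checked by expanding. Rather than rely on this formula, it is cleaner to compare adjacent integers \(s\) directly. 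One computes
\[
f\!\left(\tfrac{n+1}{2}\right)=\frac{n+1}{2}-\frac{n+2}{4}=\frac n4 ,
\]
which already gives the value claimed in the statement.

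It remains to show that \(\frac{n+1}{2}\) is the unique integer maximizer of \(f\); this is where oddness of \(n\) enters. The vertex is
\[
s^*=\frac{n+1}{2}\cdot\frac{n+1}{n+2}=\frac{n+1}{2}-\frac{n+1}{2(n+2)} ,
\]
which lies strictly between \(\frac{n+1}{2}-\frac12\) and \(\frac{n+1}{2}\), because \(0<\frac{n+1}{2(n+2)}<\frac12\). Since \(n\) is odd, \(\frac{n+1}{2}\) is an integer. A concave parabola restricted to the integers is maximized at the integer nearest its vertex, and that maximizer is unique unless the vertex sits exactly at a half-integer. Here the distance from \(s^*\) to \(\frac{n+1}{2}\) is strictly less than \(\frac12\), so \(\frac{n+1}{2}\) is the unique integer maximizer, with value \(\frac n4\). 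Together with the reduction step, this proves both the maximum value and the equality characterization.

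The main obstacle is keeping the equality case clean, specifically the strict coordinatewise convexity used to exclude maximizers in the interior of the cube or on its faces. The positive diagonal entry of \(Q^2\) settles this.
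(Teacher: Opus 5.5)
Your proof is correct and follows the paper's route: you reduce to vertices of the cube by strict convexity, then maximize $k-\frac{n+2}{(n+1)^2}k^2$ over integers $k$. Two small differences: you use positivity of the diagonal entries of $Q^2$ coordinate by coordinate, where the paper uses positive definiteness of $Q^2$; and your vertex-location argument supplies the uniqueness detail that the paper only asserts. One blemish: your displayed completed-square identity is false as written (at $t=0$ it gives $\frac{n(n+1)}{4(n+2)}\neq\frac n4$; the correct form is $f\bigl(\tfrac{n+1}{2}+t\bigr)=\frac{(n+1)^2}{4(n+2)}-\frac{n+2}{(n+1)^2}\bigl(t+\frac{n+1}{2(n+2)}\bigr)^2$), but nothing depends on it, so you should simply delete it.
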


\begin{proof}
	Since
$
	Q^2=I_n-\frac{n+2}{(n+1)^2}J_n,
$
	we have
	\begin{equation}\label{eq:q-form}
		\|Qx\|_2^2
		=\sum_{i=1}^n x_i^2
		-\frac{n+2}{(n+1)^2}
		\left(\sum_{i=1}^n x_i\right)^2.
	\end{equation}
	The matrix \(Q^2\) is positive definite, so the function in
	\eqref{eq:q-form} is strictly convex.  Consequently, any maximizer
	over the cube must be a cube vertex.  At a vertex with \(k\) nonzero entries, the value of \eqref{eq:q-form} is
	\[
	q(k)=k-\frac{n+2}{(n+1)^2}k^2.
	\]
Since \(n\) is odd, the unique maximizing integer is $k=(n+1)/2$. The equality can occur only when \(x\in\{0,1\}^n\) has number of nonzero entries \((n+1)/2\), or equivalently, $ \one^{\mathsf T}x=(n+1)/2.$ Moreover, \[ q\left(\frac{n+1}{2}\right)=\frac n4. \]
	Finally, strict convexity rules out equality at any nonvertex. Indeed,
	if a nonvertex were written as a nontrivial convex combination of two
	distinct points of the cube, its value would be strictly smaller than
	the larger of the two endpoint values.  
\end{proof}

We now combine Lemma~\ref{lem:trace-inequality} with
Lemma~\ref{lem:odd-cube} to prove Theorem~\ref{thm:main} in odd
dimensions. The equality conditions in these two lemmas also provide
a complete characterization of the extremizers.

\begin{prop}\label{prop:odd}
	The conclusion of Theorem~\ref{thm:main} holds whenever \(n\) is odd.
\end{prop}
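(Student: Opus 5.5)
We apply Lemma~\ref{lem:trace-inequality} with \(G=AA^{\mathsf T}\) and the test matrix \(Y=cQ^2\), where \(Q=I_n-\frac1{n+1}J_n\) and \(c>0\) is a scalar to be optimized. Then \(\tr\bigl((AA^{\mathsf T})^{-1}\bigr)=\|A^{-1}\|_F^2\), and
\[
\tr(YGY)=c^2\tr\bigl(Q^2AA^{\mathsf T}Q^2\bigr)=c^2\|Q^2A\|_F^2 .
\]
It is cleaner to use \(Y=cQ\) instead. Then \(\tr(YGY)=c^2\|QA\|_F^2=c^2\sum_{j}\|Qa_j\|_2^2\), where \(a_j\in[0,1]^n\) are the columns of \(A\). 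By Lemma~\ref{lem:odd-cube}, each term is at most \(n/4\), so \(\tr(YGY)\le c^2n^2/4\). Also
\[
\tr Y=c\,\tr Q=c\Bigl(n-\frac{n}{n+1}\Bigr)=\frac{cn^2}{n+1}.
\]
Hence
\[
\|A^{-1}\|_F^2\ge \frac{2cn^2}{n+1}-\frac{c^2n^2}{4}.
\]
Maximizing in \(c\) gives \(c=\frac{4}{n+1}\), and the resulting bound is
\[
\frac{8n^2}{(n+1)^2}-\frac{4n^2}{(n+1)^2}=\frac{4n^2}{(n+1)^2},
\]
which is \eqref{eq:mainA}.

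For the equality case, suppose \(\|A^{-1}\|_F=2n/(n+1)\). Both inequalities used must then be equalities. First, Lemma~\ref{lem:odd-cube} forces every column \(a_j\) to be a \(0\)--\(1\) vector with exactly \((n+1)/2\) ones, so \(\one^{\mathsf T}A=\frac{n+1}{2}\one^{\mathsf T}\). Second, equality in Lemma~\ref{lem:trace-inequality} forces \(G^{-1}=Y\), that is, \((AA^{\mathsf T})^{-1}=\frac4{n+1}Q\). Since
\[
Q^{-1}=I_n+\frac{1}{n+1}\cdot\frac{n+1}{1}\cdot\frac{1}{1}\cdots
\]
we compute it directly: for \(Q=I-tJ\) one has \(Q^{-1}=I+\frac{t}{1-nt}J\), and with \(t=1/(n+1)\) this gives \(Q^{-1}=I+J\). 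Therefore
\[
AA^{\mathsf T}=\frac{n+1}{4}(I_n+J_n).
\]
Since \(A\) is a \(0\)--\(1\) matrix, this is one of the Gram identities in \eqref{eq:Sgram}, so \(A\) is an \(S\)-matrix.

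Conversely, if \(A\) is an \(S\)-matrix, then \eqref{eq:Svalue} already gives equality. Note that an \(S\)-matrix exists only when \(n\equiv 3\pmod 4\) or \(n=1\), which is consistent with \(n\) being odd. The case \(n=1\) can be checked directly: \(Q=\frac12\), the maximum in Lemma~\ref{lem:odd-cube} is \(1/4\), and the bound is \(\|A^{-1}\|_F\ge 1\) with equality iff \(A=(1)\).

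The only delicate point is the equality direction: one must invoke \eqref{eq:Sgram} in the form \(AA^{\mathsf T}\) rather than \(A^{\mathsf T}A\), which the paper states is equivalent. Everything else is the routine computation of \(\tr Q\) and the optimization over \(c\).
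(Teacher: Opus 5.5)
Your proof is correct and is essentially the paper's argument: the same test matrix \(Y=\frac{4}{n+1}Q\) in Lemma~\ref{lem:trace-inequality} (you obtain the scalar by optimizing \(c\)), the same columnwise use of Lemma~\ref{lem:odd-cube}, and the same equality analysis via \(Q^{-1}=I_n+J_n\) and the Gram identity \eqref{eq:Sgram}. The separate \(n=1\) check is redundant, since the general argument already covers it, and in a final write-up you should delete the abandoned \(Y=cQ^2\) start and the garbled first line for \(Q^{-1}\).
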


\begin{proof}
	Let \(G=AA^{\mathsf T}\), and set
	$ Q=I_n-\frac1{n+1}J_n$ and $Y=\frac4{n+1}Q$.
	Since $\tr Q=\frac{n^2}{n+1}$, Lemma~\ref{lem:trace-inequality} gives
	\begin{equation} \label{eq:odd1}
		\|A^{-1}\|_F^2 =\tr G^{-1} \ge \frac{8n^2}{(n+1)^2} -\frac{16}{(n+1)^2}\|QA\|_F^2.
	\end{equation}
	Applying Lemma~\ref{lem:odd-cube} to each of the \(n\) columns of \(A\)
	yields
	\begin{equation}\label{eq:odd2}
		\|QA\|_F^2\le\frac{n^2}{4}.
	\end{equation}
	Combining \eqref{eq:odd1} and \eqref{eq:odd2}, we obtain
	\begin{equation}\label{eq:odd3}
		\|A^{-1}\|_F^2\ge\frac{4n^2}{(n+1)^2}.
	\end{equation}
	
	It remains to characterize the equality case in \eqref{eq:odd3}.
	Suppose that equality holds. Then equality must hold in both
	\eqref{eq:odd1} and \eqref{eq:odd2}. By
	Lemma~\ref{lem:trace-inequality}, equality in \eqref{eq:odd1} implies
	\[
	G^{-1}=\frac4{n+1}Q.
	\]
	Moreover, by Lemma~\ref{lem:odd-cube}, equality in every columnwise
	estimate used in \eqref{eq:odd2} forces \(A\) to be a \(0\)--\(1\)
	matrix, each of whose columns contains exactly \((n+1)/2\) nonzero
	entries. Since $ Q^{-1}=I_n+J_n,$ we obtain
	\[
	AA^{\mathsf T} =\frac{n+1}{4}(I_n+J_n).
	\]
	By the equivalent Gram characterization of \(S\)-matrices in \eqref{eq:Sgram}, this shows
	that \(A\) is an \(S\)-matrix.
	
	Conversely, every \(S\)-matrix attains equality by \eqref{eq:Svalue}.
\end{proof}

\section{A range-constrained pseudoinverse estimate} \label{s:3}

First, we recall the definition of the Moore--Penrose inverse of a matrix; see
\cite[p.~40]{BG03}.
\begin{defin}
	Let \(M\in\R^{n\times n}\).  The \emph{Moore--Penrose inverse} of \(M\),
	denoted by \(M^\dagger\), is the unique matrix in \(\R^{n\times n}\)
	satisfying
	\[
	MM^\dagger M=M,
	\qquad
	M^\dagger MM^\dagger=M^\dagger,
	\]
	and
	\[
	(MM^\dagger)^{\mathsf T}=MM^\dagger,
	\qquad
	(M^\dagger M)^{\mathsf T}=M^\dagger M.
	\]
\end{defin}

For \(x\in\R^n\), set
\[
 \osc(x)=\max_j x_j-\min_j x_j.
\]
The central result of this section is the following estimate.

\begin{thm}\label{thm:centered}
Let \(n\ge4\) be even.  Suppose that \(D\in\R^{n\times n}\) satisfies
$
 \rank D=n-1, D\one=0,
$
and $
 \osc(d_i)\le2$ for all $1\le i\le n$,
where \(d_i^{\mathsf T}\) denotes the \(i\)-th row of \(D\).  Then
\begin{equation*}
 \|D^\dagger\|_F^2\ge\frac{n-1}{n+1}.
\end{equation*}
\end{thm}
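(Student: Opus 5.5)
Since $D\one=0$ and $\rank D=n-1$, the kernel of $D$ is exactly $\Span\{\one\}$. Put $P=I_n-\frac1nJ_n$ and let $W=\one^\perp$ and $U=\operatorname{range}D$. Then $\|D^\dagger\|_F^2=\sum_{k=1}^{n-1}\sigma_k^{-2}$, where $\sigma_1,\dots,\sigma_{n-1}$ are the nonzero singular values of $D$. I would again use a variational bound in the spirit of Lemma~\ref{lem:trace-inequality}, now restricted to $W$. For any symmetric $Y$ with $Y=PYP$, the operator $G=D^{\mathsf T}D$ is positive definite on $W$, and $(D^{\mathsf T}D)^\dagger$ is its inverse there. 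Completing the square on $W$ gives
\[
\|D^\dagger\|_F^2=\tr (D^{\mathsf T}D)^\dagger\ge 2\tr Y-\tr(YD^{\mathsf T}DY).
\]
The natural choice is $Y=cP$. This yields $\|D^\dagger\|_F^2\ge 2c(n-1)-c^2\|D\|_F^2$. Optimizing over $c$ gives $\|D^\dagger\|_F^2\ge (n-1)^2/\|D\|_F^2$.

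It therefore suffices to bound $\|D\|_F^2=\sum_i\|d_i\|_2^2$ using only the constraints $d_i\perp\one$ and $\osc(d_i)\le2$. A vector with $\osc\le2$ and zero sum is $x-\bar x\one$ with $x\in[0,2]^n$. By convexity its squared norm is maximized at vertices with $k$ entries equal to $2$, giving $4k(n-k)/n\le n$, with the maximum at $k=n/2$. This gives only $\|D\|_F^2\le n^2$ and hence $\|D^\dagger\|_F^2\ge (n-1)^2/n^2$, which is weaker than the target $(n-1)/(n+1)$.

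The crude bound fails because rows attaining $k=n/2$ cannot produce a well-conditioned $D$. At equality all singular values on $W$ would have to be equal, so $D^{\mathsf T}D=\frac{n^2}{n-1}P$ with centered $\pm1$ rows. Such rows are orthogonal to $\one$ and have even parity structure, and a Gram-type parity obstruction forbids this (compare with Hadamard orders). So the real content is a second-order or parity gap.

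The plan is therefore:
\begin{enumerate}
\item Reduce to extremal $D$. By compactness, a minimizer exists in the closure, provided degenerations are handled: as $\rank D$ drops, $\|D^\dagger\|_F$ blows up. Then use first-order conditions.
\item Write $f(D)=\tr(D^{\mathsf T}D)^\dagger$. Each row $d_i$ enters through $D^{\mathsf T}D=\sum_i d_id_i^{\mathsf T}$, and $f$ is concave in the rank-one term $d_id_i^{\mathsf T}$ along row perturbations. More precisely, the map $x\mapsto f$ with the other rows fixed is a ratio of quadratics, and its second variation shows that a minimizer must have each row at a vertex of the projected cube, that is, $d_i=P\varepsilon_i$ with $\varepsilon_i\in\{0,2\}^n$. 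This is the frame-potential argument alluded to in the sketch.
\item For $D=P E$ with $E$ a $\{0,2\}$ (equivalently, shifted $\pm1$) matrix, prove $\tr (D^{\mathsf T}D)^\dagger\ge\frac{n-1}{n+1}$. Do this with the same variational bound, but with $Y$ chosen as $cP$ plus a correction accounting for row sums. Combine it with the exact vertex norms $4k_i(n-k_i)/n$ and the fact that for even $n$ a spectral-variance term is bounded below by parity: entries of $E^{\mathsf T}E$ have fixed parity, so $D^{\mathsf T}D$ cannot be a multiple of $P$.
\item Treat $n=4$ separately by direct enumeration or a polyhedral argument, since there the parity gap estimate may be degenerate.
\end{enumerate}

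The main obstacle is step 2. Showing that a minimizer's rows lie at projected cube vertices requires care: the objective is not concave globally, only along directions changing a single row within $W$. I would try the rank-one update formula: replacing $d_i$ by $d_i+th$ changes $f$ through a Sherman--Morrison expression whose second derivative in $t$ has a sign, forcing boundary, then vertex, optimality coordinatewise. The parity computation in step 3 converts the failed exact equality into the sharp constant $(n-1)/(n+1)$. Checking that this constant is what the spectral-variance bound yields is the second delicate point.
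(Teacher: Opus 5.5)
Your Step~2 fails, and it carries the whole proof. The objective $f(D)=\tr\bigl((D^{\mathsf T}D)^\dagger\bigr)$ is \emph{convex}, not concave, in the rank-one term $d_id_i^{\mathsf T}$, because $X\mapsto\tr X^{-1}$ is convex on $\one^\perp$. After composing with $x\mapsto xx^{\mathsf T}$, the rowwise second variation has no sign. Replace a row $x$ by $x+\tau z$ with $x,z\in\one^\perp$, put $G=D^{\mathsf T}D$ and $\phi(\tau)=\tr\bigl((D_\tau^{\mathsf T}D_\tau)^\dagger\bigr)$. Then
\[
\phi''(0)=2\tr\bigl(G^\dagger\dot G\,G^\dagger\dot G\,G^\dagger\bigr)-2\,z^{\mathsf T}(G^\dagger)^2z,
\qquad
\dot G=xz^{\mathsf T}+zx^{\mathsf T}.
\]
If $G=gP$, this equals $\frac{2}{g^3}\bigl[(2\|x\|_2^2-g)\|z\|_2^2+2(x^{\mathsf T}z)^2\bigr]$, which is strictly positive once $\|x\|_2^2>g/2$. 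In the near-extremal regime $g\approx n^2/(n-1)$. A row at the midpoint of an edge of $\cK_n$ joining two balanced projected sign vertices has $\|x\|_2^2=n-1-\frac1n$. So $f$ is locally strictly convex along such edges, and no second-order test can exclude a minimizer with a row in the relative interior of an edge. The reduction to projected sign matrices is therefore unjustified for $f$ itself. (Also note the notation: the reduced matrix is $EP$, not $PE$, since the rows are $P\varepsilon_i$.)

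The missing idea is to run the vertex reduction on a different functional. The paper never minimizes $\|D^\dagger\|_F^2$ directly. Instead it proves the polynomial inequality $V\ge\frac{n^2(n-2)}{n-1}\bigl(T-(n^2-1)\bigr)$, where $T=\tr G$ and $V=\tr(G^2)-T^2/(n-1)$, by minimizing $\Phi=V-\frac{n^2(n-2)}{n-1}\bigl(T-(n^2-1)\bigr)$ (Lemma~\ref{lem:frame-gap}). Two features make the second variation work there. First, the linear $T$-term contributes the large concave amount $-\frac{n^2(n-2)}{2(n-1)}\|z\|_2^2$ (Lemma~\ref{lem:row-second-variation}). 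Second, the contradiction hypothesis $\Phi(X_\ast)<0$ forces $T_\ast=n^2-1+\varepsilon$ with $V_\ast<\frac{n^2(n-2)}{n-1}\varepsilon$. This gives $\lambda_{\max}<\frac{T_\ast+n(n-2)\sqrt\varepsilon}{n-1}$, which controls the convex part $z^{\mathsf T}G_\ast z$, and for $n\ge6$ the total is negative.

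Your Step~3 is also only qualitative. Parity showing $D^{\mathsf T}D\neq cP$ rules out equality in $(n-1)^2/T$, but you need a quantitative margin beating the deficit $\frac{n-1}{n+1}-\frac{(n-1)^2}{T}$. Moreover, unbalanced vertex rows lower $T$ by $\delta=\|R\one\|_2^2/n$, and that loss must be traded against the variance gain. Lemma~\ref{lem:projected-sign} supplies exactly this, using $N_{jk}\in2\mathbb{Z}\Rightarrow N_{jk}^2\ge-2N_{jk}$ and the evenness of the entries of $R\one$. Lemma~\ref{lem:reciprocal-variance} then converts the variance bound into $\sum_i\lambda_i^{-1}\ge\frac{n-1}{n+1}$.

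Finally, $n=4$ is exceptional not because the parity estimate degenerates but because the sign factor $2+\sqrt\varepsilon-\frac n2$ is positive there. The paper handles it with $\det\Gamma\le2$ (Lemma~\ref{lem:n4-determinant}), where vertex reduction is legitimate because $r\mapsto\det(\Gamma_0+rr^{\mathsf T})$ is a convex quadratic. It combines this with Lemma~\ref{lem:n4-small-eigenvalue}.
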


We first describe the common row polytope.  Let
$
 P=I_n-\frac1nJ_n
$
be the orthogonal projection onto \(\one^\perp\), and define
\begin{equation*}
 \cK_n=\{x\in\one^\perp:\osc(x)\le2\}.
\end{equation*}

\begin{lem}\label{lem:row-polytope}
For every \(n\ge2\),
\begin{equation}\label{eq:projected-cube}
 \cK_n=P[-1,1]^n.
\end{equation}
Moreover,
\begin{equation}\label{eq:row-norm}
 \|x\|_2^2\le n \qquad (x\in\cK_n).
\end{equation}
If \(n\) is even, equality in \eqref{eq:row-norm} holds exactly at the
balanced sign vectors, that is, at vectors in \(\{-1,1\}^n\cap
\one^\perp\).
\end{lem}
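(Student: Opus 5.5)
The plan has three parts: prove the set identity \eqref{eq:projected-cube} by two inclusions, then obtain the norm bound \eqref{eq:row-norm} by convexity over the vertices of the polytope, and finally read off the equality cases from the vertex computation.

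\emph{The identity \eqref{eq:projected-cube}.} For the inclusion \(P[-1,1]^n\subseteq\cK_n\), take \(y\in[-1,1]^n\). Then \(Py=y-\bar y\one\) lies in \(\one^\perp\), and \(\osc(Py)=\osc(y)\le 2\). For the reverse inclusion, take \(x\in\one^\perp\) with \(\osc(x)\le2\), and set \(m=\max_j x_j\) and \(c=(m+\min_j x_j)/2\). Then \(y=x-c\one\) satisfies \(\max_j|y_j|=\osc(x)/2\le1\), so \(y\in[-1,1]^n\). Since \(x\perp\one\), we get \(Py=x\). This proves \(\cK_n=P[-1,1]^n\).

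\emph{The bound \eqref{eq:row-norm}.} The function \(x\mapsto\|x\|_2^2\) is convex, and \(\cK_n\) is a polytope, namely the linear image of the cube. Hence the maximum is attained at a point of the form \(Pv\) with \(v\in\{-1,1\}^n\). If \(v\) has \(k\) entries equal to \(+1\), a direct computation gives
\[
\|Pv\|_2^2=n-\frac{(\one^{\mathsf T}v)^2}{n}=n-\frac{(2k-n)^2}{n}\le n .
\]
This proves \eqref{eq:row-norm}.

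\emph{Equality cases for even \(n\).} At a projected vertex, equality holds exactly when \(k=n/2\); then \(\one^{\mathsf T}v=0\), so \(Pv=v\) is a balanced sign vector. Conversely, a balanced sign vector lies in \(\cK_n\) and has squared norm \(n\).

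The one delicate point, and the step I expect to need the most care, is excluding equality at non-vertex points of \(\cK_n\), since \(x\mapsto\|x\|_2^2\) is strictly convex but a maximizer could a priori sit on a face. Suppose \(x\in\cK_n\) satisfies \(\|x\|_2^2=n\). Write \(x\) as a convex combination of projected vertices \(Pv\) with positive weights. Each \(\|Pv\|_2^2\le n\), so strict convexity forces all the \(Pv\) in the combination to coincide with \(x\). Hence \(x\) is itself a projected vertex \(Pv\) attaining \(n\). By the previous step \(v\) is balanced and \(x=v\in\{-1,1\}^n\cap\one^\perp\).

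A more elementary alternative avoids the face issue entirely. For \(x\in\cK_n\), the shifted vector \(y=x-c\one\) from the first step lies in \([-1,1]^n\), and since \(x\perp\one\),
\[
\|x\|_2^2=\|y\|_2^2-n c^2\le \|y\|_2^2\le n .
\]
Equality forces \(c=0\) and \(|y_j|=1\) for all \(j\). Then \(x=y\) is a sign vector orthogonal to \(\one\), which is possible only when \(n\) is even. This gives both the bound and the equality characterization directly, so I would present this version.
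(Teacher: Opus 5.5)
Your proof is correct. The set identity is proved exactly as in the paper: invariance of $\osc$ under $P$ for one inclusion, and the midpoint shift $c$ for the other.

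For the norm bound your route differs slightly. With $a=\min_j x_j$ and $b=\max_j x_j$, the paper sums $(b-x_j)(x_j-a)\ge0$ over $j$ and uses $\one^{\mathsf T}x=0$ to get $\|x\|_2^2\le -nab\le n(b-a)^2/4\le n$. It then reads off equality from $b=-a=1$ with every coordinate at an endpoint. Your elementary version instead applies Pythagoras to $y=x-c\one$ with $x\perp\one$. In effect it uses that $x$ is the orthogonal projection under $P$ of a vector in $[-1,1]^n$.

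The two arguments give the same quantitative bound. Indeed $\|y\|_2^2-nc^2\le n\bigl((b-a)^2-(a+b)^2\bigr)/4=-nab$. Your version has the advantage of reusing the shift from the set identity, so the equality case ($c=0$, $|y_j|=1$) falls out with no extra work.

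Your first argument, by convexity over the projected vertices $Pv$, is also valid. The Jensen/strict-convexity step correctly rules out equality at non-vertex points. However, it needs the convex-hull structure of $P[-1,1]^n$ and the computation of $\|Pv\|_2^2$. Presenting the elementary version, as you plan, is the cleaner choice.
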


\begin{proof} 
\textbf{Step 1: $P[-1,1]^n\subset \cK_n$.}	Let \(y\in[-1,1]^n\). Since
	\[
	Py
	=
	y-\frac{1}{n}\bigl(\one^{\mathsf T}y\bigr)\one,
	\]
	the vector \(Py\) is obtained from \(y\) by subtracting the same
	constant from every coordinate. Hence, for all \(j,k\),
	\[
	(Py)_j-(Py)_k=y_j-y_k.
	\]
	It follows that
	\[
	\osc(Py)
	=
	\max_{j,k}\bigl((Py)_j-(Py)_k\bigr)
	=
	\max_{j,k}(y_j-y_k)
	=
	\osc(y).
	\]
	Since \(y\in[-1,1]^n\), we have
	\[
	\osc(y)\le 2.
	\]
	Moreover,
	\[
	\one^{\mathsf T}Py=0,
	\]
	because \(P\) is the orthogonal projection onto \(\one^\perp\).
	Therefore \(Py\in\one^\perp\) and \(\osc(Py)\le2\), so
	\[
	Py\in\cK_n.
	\]
	As \(y\in[-1,1]^n\) was arbitrary, this proves
	\[
	P[-1,1]^n\subseteq\cK_n.
	\]


\textbf{Step 2: $ \cK_n\subset P[-1,1]^n$.} Conversely, let \(x\in\cK_n\), and set
\[
a=\min_j x_j,
\qquad
b=\max_j x_j,
\qquad
c=\frac{a+b}{2}.
\]
Since \(x\in\cK_n\), we have
\[
b-a=\osc(x)\le2.
\]
For every \(j\),
\[
a\le x_j\le b,
\]
and therefore
\[
-\frac{b-a}{2}
=
a-c
\le
x_j-c
\le
b-c
=
\frac{b-a}{2}.
\]
Because \(b-a\le2\), it follows that
\[
-1\le x_j-c\le1
\qquad (1\le j\le n).
\]
Hence
\[
x-c\one\in[-1,1]^n.
\]
Moreover, \(x\in\one^\perp\), so \(Px=x\), while \(P\one=0\).
Consequently,
\[
P(x-c\one)
=
Px-cP\one
=
x.
\]
Thus \(x\) is the projection under \(P\) of a vector in
\([-1,1]^n\), and hence
\[
x\in P[-1,1]^n.
\]
This proves
\[
\cK_n\subseteq P[-1,1]^n.
\]
Together with the reverse inclusion established above, we obtain
\eqref{eq:projected-cube}.

\textbf{Step 3: norm estimate.}  Recall \(a=\min_j x_j\) and \(b=\max_j x_j\).  Since \(x\perp\one\), we
have \(a\le0\le b\).  The inequality
\((b-x_j)(x_j-a)\ge0\), summed over \(j\), gives
\[
 \|x\|_2^2\le-nab\le n\frac{(b-a)^2}{4}\le n.
\]
Equality requires \(b-a=2\), \(b=-a=1\), and every coordinate to equal
one of the endpoints.  The zero-sum condition then forces equal numbers
of \(1\)'s and \(-1\)'s.  Conversely, every balanced sign vector plainly
gives equality.
\end{proof}

The proof of Theorem~\ref{thm:centered} has two parts. The case $n=4$
has a small but exceptional coefficient polytope, which we handle
first.  The case $n\ge 6$ admits a uniform frame-potential
argument.

\subsection{The case $n=4$ in Theorem~\ref{thm:centered}}

Throughout this subsection, let
\begin{equation}\label{eq:uvw}
	u=(1,1,-1,-1)^{\mathsf T},\qquad
	v=(1,-1,1,-1)^{\mathsf T},\qquad
	w=(1,-1,-1,1)^{\mathsf T},
\end{equation}
and define
\begin{equation}\label{eq:K4}
	\mathcal{P}_4
	=
	\left\{
	(a,b,c)\in\mathbb{R}^3:
	|a|+|b|\le1,\ 
	|a|+|c|\le1,\ 
	|b|+|c|\le1
	\right\}.
\end{equation}

We first reduce the problem to a 3-dimensional Gram matrix.

\begin{lem}\label{lem:n4-reduction}
	Let \(D\in\mathbb{R}^{4\times4}\) satisfy
	\[
	D\one=0,
	\qquad
	\rank D=3,
	\qquad
	\osc(d_i)\le2
	\quad (1\le i\le4),
	\]
where \(d_i^{\mathsf T}\) is the \(i\)-th row of \(D\). Write
$
d_i=a_i u+b_i v+c_i w,
$
where \(u,v,w\) are defined by \eqref{eq:uvw}. Let
$
r_i=(a_i,b_i,c_i)^{\mathsf T}\in\mathbb{R}^3,
$
let \(M\in\mathbb{R}^{4\times3}\) have \(i\)-th row
\(r_i^{\mathsf T}\), and put
$
\Gamma=M^{\mathsf T}M.
$
	Then \(r_i\in\mathcal{P}_4\) in \eqref{eq:K4} for every \(i\), the matrix
	\(\Gamma\) is positive definite, and
	\begin{equation}\label{eq:n4-pseudo}
		\|D^\dagger\|_F^2
		=
		\frac14\tr\Gamma^{-1}.
	\end{equation}
\end{lem}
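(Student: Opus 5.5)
The plan has three parts: show each coefficient vector lies in \(\mathcal{P}_4\), factor \(D\) through the orthonormal frame built from \(u,v,w\), and read off the pseudoinverse from that factorization.

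\textbf{Step 1: the rows and the polytope.} The vectors \(u,v,w\) in \eqref{eq:uvw} are pairwise orthogonal. Each has squared norm \(4\), and each is orthogonal to \(\one\). Hence they form an orthogonal basis of \(\one^\perp\). Since \(D\one=0\), every row satisfies \(d_i\in\one^\perp\), so the expansion \(d_i=a_iu+b_iv+c_iw\) exists and is unique. Its coordinates are
\[
d_i=(a_i+b_i+c_i,\ a_i-b_i-c_i,\ -a_i+b_i-c_i,\ -a_i-b_i+c_i).
\]
Pairwise differences of these coordinates are \(\pm2(b_i+c_i)\), \(\pm2(a_i+c_i)\), \(\pm2(a_i+b_i)\), \(\pm2(b_i-c_i)\), \(\pm 2(a_i-c_i)\) and \(\pm2(a_i-b_i)\). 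The oscillation is the largest of these in absolute value, so
\[
\osc(d_i)=2\max\{|a_i|+|b_i|,\ |a_i|+|c_i|,\ |b_i|+|c_i|\},
\]
using \(\max(|s+t|,|s-t|)=|s|+|t|\). The condition \(\osc(d_i)\le2\) is therefore exactly \(r_i\in\mathcal{P}_4\).

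\textbf{Step 2: factorization.} Let \(W\in\mathbb{R}^{4\times3}\) have columns \(u,v,w\). Then \(W^{\mathsf T}W=4I_3\), and \(D=MW^{\mathsf T}\). Put \(U=W/2\), which has orthonormal columns. Then \(D=2MU^{\mathsf T}\), so \(\rank D=\rank M\). Since \(\rank D=3\), the matrix \(M\) has full column rank, and \(\Gamma=M^{\mathsf T}M\) is positive definite.

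\textbf{Step 3: pseudoinverse formula.} I claim that
\[
D^\dagger=\tfrac12\,U\,(M^{\mathsf T}M)^{-1}M^{\mathsf T}=\tfrac12\,UM^\dagger.
\]
To confirm this, I will check the four Penrose conditions directly using \(U^{\mathsf T}U=I_3\) and \(M^\dagger M=I_3\). The key products are:
\begin{itemize}
\item \(DD^\dagger=MM^\dagger\), which is symmetric;
\item \(D^\dagger D=UU^{\mathsf T}\), which is symmetric.
\end{itemize}
The remaining two identities follow immediately from these.

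Then
\[
\|D^\dagger\|_F^2=\tfrac14\tr\bigl(M^{\dagger\mathsf T}U^{\mathsf T}UM^\dagger\bigr)=\tfrac14\tr\bigl(M^\dagger M^{\dagger\mathsf T}\bigr).
\]
Finally, \(M^\dagger M^{\dagger\mathsf T}=\Gamma^{-1}M^{\mathsf T}M\Gamma^{-1}=\Gamma^{-1}\), which gives \eqref{eq:n4-pseudo}.

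The only mildly delicate step is the oscillation computation in Step 1, and it is elementary.
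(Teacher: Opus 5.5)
Your proposal is correct and follows the paper's proof. It uses the same orthogonal expansion in $u,v,w$, the same reduction of $\osc(d_i)\le2$ to $|a_i|+|b_i|,|a_i|+|c_i|,|b_i|+|c_i|\le1$, and the same factorization of $D$ through $M$. The only difference is the last step: you verify the explicit formula $D^\dagger=\tfrac12UM^\dagger$ via the Penrose conditions, whereas the paper notes that $DD^{\mathsf T}=4MM^{\mathsf T}$, so the nonzero eigenvalues of $D^{\mathsf T}D$ are four times those of $\Gamma$, and then sums their reciprocals.
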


\begin{proof}
	Since \(D\one=0\), every row \(d_i\) belongs to
	\(\one^\perp\). The vectors \(u,v,w\) form an orthogonal basis of
	\(\one^\perp\), and each has squared norm \(4\). Hence each row has a
	unique representation
	\[
	d_i=a_i u+b_i v+c_i w.
	\]
	
	For \(d=au+bv+cw\), its four coordinates are
	\[
	a+b+c,\qquad
	a-b-c,\qquad
	-a+b-c,\qquad
	-a-b+c.
	\]
	The six quantities obtained by halving the pairwise coordinate
	differences are
	\[
	a\pm b,\qquad
	a\pm c,\qquad
	b\pm c.
	\]
	Therefore, \(\osc(d)\le2\) if and only if all six of these quantities
	have modulus at most \(1\). Since
	\[
	\max\{|a+b|,|a-b|\}=|a|+|b|,
	\]
	and similarly for the other two pairs, this condition is equivalent to
	\[
	|a|+|b|\le1,\qquad
	|a|+|c|\le1,\qquad
	|b|+|c|\le1.
	\]
	Thus \(r_i\in\mathcal{P}_4\).
	
	Let \(U\in\mathbb{R}^{3\times4}\) be the matrix whose rows are
	\(u^{\mathsf T},v^{\mathsf T},w^{\mathsf T}\). Then
	\[
	D=MU,
	\qquad
	UU^{\mathsf T}=4I_3,
	\]
	and consequently
	\[
	DD^{\mathsf T}
	=
	MUU^{\mathsf T}M^{\mathsf T}
	=
	4MM^{\mathsf T}.
	\]
	The nonzero eigenvalues of \(D^{\mathsf T}D\) coincide with those of
	\(DD^{\mathsf T}\), while the nonzero eigenvalues of
	\(MM^{\mathsf T}\) coincide with those of
	\(\Gamma=M^{\mathsf T}M\). Hence the three nonzero eigenvalues of
	\(D^{\mathsf T}D\) are four times the eigenvalues of \(\Gamma\).
	
	Since \(\rank D=3\), we also have \(\rank M=3\), so \(\Gamma\) is
	positive definite. Summing the reciprocals of the three nonzero
	eigenvalues of \(D^{\mathsf T}D\) gives
	\[
	\|D^\dagger\|_F^2
	=
	\frac14\tr\Gamma^{-1}.
	\]
\end{proof}

The next lemma describes the row polytope appearing in
Lemma~\ref{lem:n4-reduction}.

\begin{lem}\label{lem:n4-polytope}
The polytope \(\mathcal{P}_4\) in \eqref{eq:K4} is the convex hull of
the six axial points
\[
\pm e_1=\pm(1,0,0),\qquad
\pm e_2=\pm(0,1,0),\qquad
\pm e_3=\pm(0,0,1),
\]
and the eight half-cube points
\[
\left(\pm\frac12,\pm\frac12,\pm\frac12\right).
\]
	In particular,
	\[
	\|r\|_2\le1
	\qquad
	(r\in\mathcal{P}_4).
	\]
\end{lem}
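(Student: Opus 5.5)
The plan is to prove the two inclusions between \(\mathcal{P}_4\) and the convex hull \(V\) of the fourteen listed points. Then I deduce the norm bound from the vertex description, using that \(\|\cdot\|_2\) is convex.

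\textbf{Step 1: \(V\subseteq\mathcal{P}_4\).} The set \(\mathcal{P}_4\) is convex, being an intersection of three convex sets of the form \(\{|a|+|b|\le1\}\). So it suffices to check the fourteen points. For \(\pm e_k\), each pairwise sum \(|\cdot|+|\cdot|\) is \(0\) or \(1\). For \((\pm\frac12,\pm\frac12,\pm\frac12)\), each pairwise sum equals \(1\).

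\textbf{Step 2: \(\mathcal{P}_4\subseteq V\).} First reduce by symmetry. Both sets are invariant under coordinate sign changes and coordinate permutations, and \(V\) is convex. So it suffices to treat \(r=(a,b,c)\) with \(a\ge b\ge c\ge0\); the constraints then reduce to \(a+b\le1\). I will write such \(r\) explicitly as a convex combination of \(0\) (which lies in \(V\) as \(\frac12(e_1-e_1)\)), \(e_1\), \(e_2\), \(e_3\) and \(h=(\frac12,\frac12,\frac12)\). The decomposition is
\[
r=2c\,h+(a-c)e_1+(b-c)e_2+\lambda_0\cdot 0,
\]
and a direct check gives coordinates \(c+(a-c)=a\), \(c+(b-c)=b\), and \(c\). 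The total nonnegative weight is \(2c+(a-c)+(b-c)=a+b\le1\), so setting \(\lambda_0=1-a-b\ge0\) completes the convex combination. The only step needing care is this reduction by symmetry, namely that the ordered nonnegative chamber together with the symmetry group covers \(\mathcal{P}_4\). It holds because sign flips and permutations preserve the defining inequalities.

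\textbf{Step 3: the norm bound.} A convex function on a polytope attains its maximum at a vertex. The axial points have norm \(1\), and the half-cube points have norm \(\sqrt{3}/2<1\). Hence \(\|r\|_2\le1\) on \(\mathcal{P}_4\).

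Alternatively, the bound follows directly: for \(a\ge b\ge c\ge0\) with \(a+b\le1\),
\[
a^2+b^2+c^2\le a^2+2b^2\le a^2+2ab+b^2\le1.
\]
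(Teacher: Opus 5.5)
Your proof is correct and follows essentially the same route as the paper: you prove both inclusions, reduce by signed coordinate permutations to the chamber $a\ge b\ge c\ge0$ (where the constraints collapse to $a+b\le1$), write an explicit convex combination there, and get the norm bound from convexity of $\|\cdot\|_2$. The differences are only cosmetic. Your decomposition $2c\,h+(a-c)e_1+(b-c)e_2+(1-a-b)\cdot 0$ uses two axial points and one half-cube point, whereas the paper uses $(a-b)e_1+(b+c)(\tfrac12,\tfrac12,\tfrac12)+(b-c)(\tfrac12,\tfrac12,-\tfrac12)+(1-a-b)\cdot0$, and your direct check $a^2+b^2+c^2\le(a+b)^2\le1$ is a valid extra route to the norm bound.
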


\begin{proof}
	All the listed points belong to \(\mathcal{P}_4\). Since
	\(\mathcal{P}_4\) is convex, their convex hull is contained in
	\(\mathcal{P}_4\).
	
	For the reverse inclusion, let \((a,b,c)\in\mathcal{P}_4\). The set
	\(\mathcal{P}_4\) and the collection of listed points are invariant
	under coordinate permutations and independent sign changes. We may
	therefore assume that
	\[
	a\ge b\ge c\ge0.
	\]
	Then
	\begin{align*}
		(a,b,c)
		={}&(a-b)e_1
		+(b+c)\left(\frac12,\frac12,\frac12\right)\\
		&+(b-c)\left(\frac12,\frac12,-\frac12\right)
		+(1-a-b)0.
	\end{align*}
	The coefficients are nonnegative because
	\[
	a\ge b\ge c\ge0
	\qquad\text{and}\qquad
	a+b\le1,
	\]
	and their sum is equal to \(1\). Moreover, \(0\) is the midpoint of
	any pair of opposite axial points. Hence \((a,b,c)\) belongs to the
	convex hull of the listed points.
	
	Every axial point has norm \(1\), and every half-cube point has norm
	\(\sqrt{3}/2\). The convexity of the Euclidean norm therefore gives
	\[
	\|r\|_2\le1
	\qquad
	(r\in\mathcal{P}_4).
	\]
\end{proof}

We next establish the determinant estimate needed in the spectral
argument.

\begin{lem}\label{lem:n4-determinant}
	Let \(M\in\mathbb{R}^{4\times3}\) have \(i\)-th row
	\(r_i^{\mathsf T}\), where
	\(r_1,\ldots,r_4\in\mathcal{P}_4\), and put
$
	\Gamma=M^{\mathsf T}M.
$
	Then
	\begin{equation}\label{eq:det-Gamma}
		\det\Gamma\le2.
	\end{equation}
\end{lem}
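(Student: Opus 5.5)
The plan is to combine the Cauchy--Binet formula with a convexity reduction to vertices, and then do a short case analysis on the vertices listed in Lemma~\ref{lem:n4-polytope}.

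\textbf{Step 1 (Cauchy--Binet).} For $M\in\R^{4\times3}$ we have
\[
\det\Gamma=\det(M^{\mathsf T}M)=\sum_{|S|=3}\det(M_S)^2 ,
\]
where $M_S$ is the $3\times3$ submatrix formed by the rows indexed by $S\subset\{1,2,3,4\}$. So $\det\Gamma$ is a sum of four squared $3\times3$ minors.

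\textbf{Step 2 (reduction to vertices).} Fix all rows except $r_i$. Each minor is linear in $r_i$, so its square is a convex function of $r_i$. Minors not containing row $i$ do not depend on $r_i$ at all. Hence $\det\Gamma$ is a convex function of each row separately. Maximizing one row at a time over $\mathcal{P}_4$, we may therefore assume every $r_i$ is one of the fourteen vertices of Lemma~\ref{lem:n4-polytope}: the axial points $\pm e_j$ and the half-cube points $(\pm\frac12,\pm\frac12,\pm\frac12)$.

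\textbf{Step 3 (bounds on single minors).} For three vertex rows I bound $|\det|$ by cases.
\begin{itemize}
\item Three axial rows: $|\det|\in\{0,1\}$.
\item Two axial rows $\pm e_a,\pm e_b$ and one half-cube row $h$: the determinant is $\pm h_c$ (with $c$ the remaining index) or $0$, so $|\det|\le\frac12$.
\item One axial row $\pm e_a$ and two half-cube rows: expanding along the axial row gives a $2\times2$ determinant with entries $\pm\frac12$, which lies in $\{0,\pm\frac12\}$.
\item Three half-cube rows: the determinant is $\frac18$ times a $3\times3$ sign determinant, which is at most $4$ in absolute value, so $|\det|\le\frac12$.
\end{itemize}
In all cases, any minor containing a half-cube row has square at most $\frac14$.

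\textbf{Step 4 (case analysis on the number of half-cube rows).}
\begin{itemize}
\item At least one half-cube row. At most one of the four minors avoids that row, and it contributes at most $1$. The other three contribute at most $\frac14$ each. Hence $\det\Gamma\le 1+\frac34<2$.
\item All four rows axial. A minor is nonzero only if its three rows use three distinct axes. With four rows and only three axes, some axis is used twice, say by rows $p$ and $q$. Every nonzero minor must then omit $p$ or $q$, so at most two minors are nonzero, each with square $1$. Hence $\det\Gamma\le2$.
\end{itemize}
This proves \eqref{eq:det-Gamma}. Equality is attained, for example, by the rows $e_1,e_1,e_2,e_3$.

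I expect the main obstacle to be Step 2. One must check that separate convexity in each row legitimately lets us move the rows to vertices one at a time: the value never decreases, and each $r_i$ ranges over the full polytope $\mathcal{P}_4$ independently of the others. The rest is the elementary enumeration in Step 3. The only small point to verify there is the bound $4$ for $3\times3$ sign determinants, which is standard (such a determinant is even and bounded by Hadamard's inequality $3^{3/2}<6$).
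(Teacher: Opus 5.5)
Your proof is correct. The reduction to vertices is the paper's argument in substance; the endgame takes a different route.

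\textbf{Reduction to vertices.} The paper writes $\det(\Gamma_0+rr^{\mathsf T})=\det\Gamma_0+r^{\mathsf T}\adj(\Gamma_0)r$ and then checks that $\adj(\Gamma_0)$ is positive semidefinite. Your Cauchy--Binet expansion shows the same quadratic form $r^{\mathsf T}\adj(\Gamma_0)r$ directly as the sum of the three squared minors containing $r$. So convexity follows without any eigenvalue computation.

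\textbf{Vertex analysis.} This is where the two arguments really differ.
\begin{itemize}
\item The paper applies Hadamard's inequality $\det\Gamma\le\prod_j\Gamma_{jj}$. It observes that $\Gamma_{jj}=\ell_j+(4-\ell)/4$ depends only on how many axial rows lie on each axis. It then maximizes over integer distributions $(\ell_1,\ell_2,\ell_3)$ by a balancing argument and a small table. This needs nothing about individual minors.
\item You bound each $3\times3$ minor of vertex rows separately. This costs one extra fact: a $3\times3$ sign determinant has modulus at most $4$. Your justification (a sum of six odd terms is even, and Hadamard gives modulus below $6$) is valid.
\end{itemize}

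\textbf{What your route buys.} It avoids the table and gives a slightly sharper picture. Any vertex configuration containing a half-cube row has $\det\Gamma\le 7/4$, whereas the paper's table only gives $125/64$ there. Among vertex configurations, the value $2$ occurs exactly when all four rows are axial with axis counts $(2,1,1)$.
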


\begin{proof}
	We first show that each row can be replaced by one of the generating
	points in Lemma~\ref{lem:n4-polytope} without decreasing the
	determinant.
	
	Fix \(r_2,r_3,r_4\), and set
	\[
	\Gamma_0
	=
	r_2r_2^{\mathsf T}
	+r_3r_3^{\mathsf T}
	+r_4r_4^{\mathsf T}.
	\]
	Regarding the first row as variable, define
	\[
	f(r)=\det(\Gamma_0+rr^{\mathsf T}).
	\]
	Clearly, $f(r_1)=\det\Gamma$.
	The rank-one determinant identity in adjugate form gives
	\[
	f(r)
	=
	\det\Gamma_0
	+r^{\mathsf T}\adj(\Gamma_0)r.
	\]
	
	Since \(\Gamma_0\) is positive semidefinite, write
	\[
	\Gamma_0
	=
	O\diag(\lambda_1,\lambda_2,\lambda_3)O^{\mathsf T},
	\qquad
	\lambda_i\ge0,
	\]
	where \(O\) is orthogonal. Then
	\[
	\adj(\Gamma_0)
	=
	O\diag
	\bigl(
	\lambda_2\lambda_3,
	\lambda_1\lambda_3,
	\lambda_1\lambda_2
	\bigr)O^{\mathsf T},
	\]
	so \(\adj(\Gamma_0)\) is also positive semidefinite. It follows that
	\(f\) is a convex function of \(r\).
	
	Let \(\mathcal{V}\) denote the set of generating points listed in
	Lemma~\ref{lem:n4-polytope}. Since
	\(\mathcal{P}_4=\operatorname{conv}(\mathcal{V})\), we may write
	\[
	r_1=\sum_{k=1}^N\alpha_k v_k,
	\qquad
	v_k\in\mathcal{V},
	\qquad
	\alpha_k\ge0,
	\qquad
	\sum_{k=1}^N\alpha_k=1.
	\]
	By convexity,
	\[
	f(r_1)
	\le
	\sum_{k=1}^N\alpha_k f(v_k)
	\le
	\max_{1\le k\le N}f(v_k).
	\]
	Thus \(r_1\) can be replaced by some \(v_k\in\mathcal{V}\) without
	decreasing \(\det\Gamma\). Repeating this argument successively for
	the remaining three rows shows that it suffices to prove the bound
	when all four rows belong to \(\mathcal{V}\).
	
	Suppose that \(\ell\) of these rows are axial and the remaining
	\(4-\ell\) are half-cube points. Let \(\ell_j\) be the number of axial
	rows lying on the \(j\)-th coordinate axis. Then
	\[
	\ell_1+\ell_2+\ell_3=\ell,
	\]
	and the \(j\)-th diagonal entry of \(\Gamma\) is
	\[
	\Gamma_{jj}
	=
	\ell_j+\frac{4-\ell}{4}.
	\]
	Indeed, an axial row on the \(j\)-th axis contributes \(1\) to
	\(\Gamma_{jj}\), whereas every half-cube row contributes \(1/4\) to
	each diagonal entry.
	
Hadamard's determinant inequality
\cite[p.~505, Theorem~7.8.1]{HJ13} gives
\[
\det\Gamma
\le
\prod_{j=1}^3\Gamma_{jj}
=
\prod_{j=1}^3
\left(
\ell_j+\frac{4-\ell}{4}
\right).
\]
	For fixed \(\ell\), this product is maximized when the integers
	\(\ell_1,\ell_2,\ell_3\) are as evenly distributed as possible.
	Indeed, if \(\ell_j\ge\ell_k+2\), then transferring one unit from
	\(\ell_j\) to \(\ell_k\) changes the corresponding two factors by
	\[
	\begin{aligned}
		&\quad\left(
		\ell_j-1+\frac{4-\ell}{4}
		\right)
		\left(
		\ell_k+1+\frac{4-\ell}{4}
		\right)-
		\left(
		\ell_j+\frac{4-\ell}{4}
		\right)
		\left(
		\ell_k+\frac{4-\ell}{4}
		\right)\\
		&=
		\ell_j-\ell_k-1>0.
	\end{aligned}
	\]
	Thus, up to permutation, the maximizing distributions and the
	resulting bounds are
	\[
	\begin{array}{c|c|c}
		\ell
		&
		(\ell_1,\ell_2,\ell_3)
		&
		\displaystyle
		\prod_{j=1}^3
		\left(
		\ell_j+\frac{4-\ell}{4}
		\right)
		\\ \hline
		0&(0,0,0)&1\\[2mm]
		1&(1,0,0)&\dfrac{63}{64}\\[2mm]
		2&(1,1,0)&\dfrac98\\[2mm]
		3&(1,1,1)&\dfrac{125}{64}\\[2mm]
		4&(2,1,1)&2
	\end{array}
	\]
	The largest of these values is \(2\), which proves
	\eqref{eq:det-Gamma}.
\end{proof}

The following lemma handles the case in which the trace of
\(\Gamma\) is close to its maximal possible value.

\begin{lem}\label{lem:n4-small-eigenvalue}
	Let \(M\in\mathbb{R}^{4\times3}\) have \(i\)-th row
	\(r_i^{\mathsf T}\), where
	\(r_1,\ldots,r_4\in\mathcal{P}_4\), and put
$
	\Gamma=M^{\mathsf T}M.
$
	If
$
	\tr\Gamma>\frac{15}{4},
$
	then
	\begin{equation}\label{eq:lambda-min-four}
		\lambda_{\min}(\Gamma)<\frac{37}{36}.
	\end{equation}
\end{lem}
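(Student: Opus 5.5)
The plan is to argue by contradiction, combining the determinant bound of Lemma~\ref{lem:n4-determinant} with the information that \(\tr\Gamma>15/4\) forces almost every row to carry nearly maximal norm. By Lemma~\ref{lem:n4-polytope}, \(\|r_i\|_2^2\le1\) for each \(i\), and \(\tr\Gamma=\sum_i\|r_i\|_2^2\). The hypothesis therefore gives a total deficit
\[
\delta=\sum_{i=1}^4\bigl(1-\|r_i\|_2^2\bigr)<\tfrac14 .
\]
In particular every row satisfies \(\|r_i\|_2^2>3/4\). Assume, for contradiction, that \(\lambda_{\min}(\Gamma)\ge 37/36\).

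\textbf{First attempt: trace and determinant only.} Let \(m=37/36\) and \(s=\tr\Gamma\). With eigenvalues \(\lambda_1\le\lambda_2\le\lambda_3\), \(\lambda_1\ge m\) and \(\lambda_1+\lambda_2+\lambda_3=s\), the product is minimized at the extreme configuration \((m,m,s-2m)\). This gives \(\det\Gamma\ge m^2(s-2m)\approx1.79\). That is below the bound \(2\) of Lemma~\ref{lem:n4-determinant}, so no contradiction follows. The spectral information alone is therefore insufficient, and I will use the row structure.

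\textbf{Main step: near-axial rows.} I will quantify how a point \(r=(a,b,c)\in\mathcal P_4\) with \(\|r\|_2^2=1-\varepsilon\) (\(\varepsilon<1/4\)) sits relative to the generators. By symmetry take \(a\ge b\ge c\ge0\). Using \(a+b\le1\) and \(b+c\le1\), one has \(b\le 1-a\) and \(\|r\|_2^2\le a^2+2(1-a)^2\) when \(a\ge1/2\). A short computation should give that the dominant coordinate satisfies \(a^2\ge1-C\varepsilon\) for an explicit constant \(C\), so that \(b^2+c^2\le C'\varepsilon\).

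The half-cube points have norm squared \(3/4\) and are excluded only barely, which is exactly why the threshold is \(15/4\). This is the step I expect to be the main obstacle: the estimate must be sharp enough near \(\varepsilon\approx1/4\). If the naive bound is too weak, I would instead split into the case "some row has \(\varepsilon_i\) close to \(1/4\)" and the case "all \(\varepsilon_i\) small", handling them separately.

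\textbf{Pigeonhole and test vector.} Assign each row to the axis of its dominant coordinate. Four rows and three axes give two cases.
\begin{itemize}
\item If some axis \(j\) receives no row, then \(\Gamma_{jj}=\sum_i r_{ij}^2\le C'\delta<37/36\). The test vector \(e_j\) then gives \(\lambda_{\min}\le\Gamma_{jj}<37/36\), a contradiction.
\item Otherwise the assignment is of type \((2,1,1)\). Let \(k\) be an axis receiving exactly one row. Then \(\Gamma_{kk}\le 1+C'\delta\), and a direct bound gives \(\Gamma_{kk}\le 1+\tfrac1{36}\). If this is not quite enough, I will refine with a test vector \(x=e_k+t e_{k'}\) (with \(k'\) the other singly-assigned axis), optimizing \(t\) to exploit the small off-diagonal entries \(\Gamma_{kk'}\). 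The constraint \(|r_{ik}|+|r_{ik'}|\le1\) will control \(\Gamma_{kk}+\Gamma_{k'k'}\pm2\Gamma_{kk'}\) by \(2+O(\delta)\), pushing the Rayleigh quotient below \(37/36\).
\end{itemize}

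In both cases the constant \(37/36\) should emerge from the worst admissible deficit \(\delta\to1/4\). I will finish by checking the extremal configurations (one row on the boundary of the half-cube region) explicitly to confirm the strict inequality.
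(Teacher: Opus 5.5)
\textbf{Verdict: genuine gap.} Your skeleton matches the paper's: total deficit $\delta<\tfrac14$, a dominant coordinate per row, assigning rows to axes, the two cases (an empty axis, or type $(2,1,1)$), and Rayleigh--Ritz with $e_j$. However, you leave open the one quantitative estimate that carries the lemma. The form you propose for it cannot close the $(2,1,1)$ case. If you bound the off-axis part of a row by $b^2+c^2\le C'\varepsilon$ and then write $\Gamma_{kk}\le 1+C'\delta$, you need $C'\le\tfrac19$. The best constant for the pair is $\tfrac29$. For $r=(1-h,h,h)\in\mathcal P_4$ one has $\varepsilon=2h-3h^2$ and $(b^2+c^2)/\varepsilon=2h/(2-3h)\to\tfrac29$ as $h\uparrow\tfrac16$, which is the largest $h$ compatible with $\varepsilon<\tfrac14$. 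So your chain gives only $\Gamma_{kk}<1+\tfrac1{18}$. The asserted ``direct bound $\Gamma_{kk}\le 1+\tfrac1{36}$'' is not justified. The fallback with the test vector $e_k+te_{k'}$ is not developed, and it is not needed.

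\textbf{The missing idea.} Control each off-principal coordinate separately, and use that its square is quadratic, not linear, in the deficit. The paper argues as follows. Let $p_i$ be the largest absolute coordinate of $r_i$.
\begin{itemize}
\item If $p_i\le\tfrac12$, then $\|r_i\|_2^2\le 3p_i^2\le\tfrac34$. If $\tfrac12<p_i\le\tfrac56$, then $\|r_i\|_2^2\le p_i^2+2(1-p_i)^2\le\tfrac34$. Both contradict $\|r_i\|_2^2>\tfrac34$, so $p_i>\tfrac56$, and the principal coordinate is unique.
\item Put $h_i=1-p_i$. Every other coordinate of $r_i$ has modulus at most $h_i$, and $\varepsilon_i\ge 1-p_i^2-2h_i^2=h_i(3p_i-1)\ge\tfrac32 h_i$. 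Hence $\sum_i h_i\le\tfrac23\delta<\tfrac16$.
\item Each non-principal entry contributes at most $h_i^2$ to a diagonal entry of $\Gamma$. In the $(2,1,1)$ case this gives $\Gamma_{kk}\le 1+\sum_i h_i^2\le 1+\bigl(\sum_i h_i\bigr)^2<\tfrac{37}{36}$. For an empty axis it gives $\Gamma_{jj}<\tfrac1{36}$.
\end{itemize}
Your linear framework can also be repaired by working with a single off-axis coordinate $b$ instead of the pair. Since $\varepsilon\ge 2b-3b^2$ and $b<\tfrac16$, one gets $b^2\le \varepsilon\, b/(2-3b)\le\varepsilon/9$. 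Then $\Gamma_{kk}\le 1+\delta/9<\tfrac{37}{36}$. What breaks your estimate is the factor $2$ lost by lumping $b^2+c^2$ together.
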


\begin{proof}
	By Lemma~\ref{lem:n4-polytope},
$	\|r_i\|_2^2\le1.
$
	Set
$
	\delta_i=1-\|r_i\|_2^2.
$
	Then \(\delta_i\ge0\), and
	\[
	\sum_{i=1}^4\delta_i
	=
	4-\tr\Gamma
	<
	\frac14.
	\]
	In particular,
	\[
	\|r_i\|_2^2=1-\delta_i\ge 1-	\sum_{i=1}^4\delta_i>\frac34
	\qquad
	(1\le i\le4).
	\]
	
	Let \(p_i\) be the largest absolute coordinate of \(r_i\). If
	\(p_i\le1/2\), then
	\[
	\|r_i\|_2^2
	\le
	3p_i^2
	\le
	\frac34,
	\]
	which is impossible. Suppose instead that
$
	1/2< p_i\le5/6.
$
	Since \(r_i\in\mathcal{P}_4\), each of the other two absolute
	coordinates is at most \(1-p_i\). Hence
	\[
	\|r_i\|_2^2
	\le
	p_i^2+2(1-p_i)^2
	\le
	\frac34,
	\]
	where the last inequality follows from
	\[
	p_i^2+2(1-p_i)^2-\frac34
	=
	3\left(p_i-\frac12\right)
	\left(p_i-\frac56\right)
	\le0.
	\]
	This is again impossible. Therefore
	\[
	p_i>\frac56.
	\]
	
	The coordinate attaining \(p_i\) is unique. Indeed, if two coordinates
	had absolute value \(p_i\), their absolute values would sum to more
	than \(1\), contradicting the definition of \(\mathcal{P}_4\). We
	call this unique coordinate the principal coordinate of \(r_i\).
	
	Put $
	h_i=1-p_i.$
	The two nonprincipal coordinates of \(r_i\) have absolute value at
	most \(h_i\). Consequently,
		\[
\delta_i
\ge
1-p_i^2-2h_i^2
=
h_i(3p_i-1)
\ge
\frac32h_i.
\]
Consequently,
\[
\sum_{i=1}^4 h_i
\le
\frac23\sum_{i=1}^4\delta_i
<
\frac16.
\]
	It follows that
	\[
	\sum_{i=1}^4 h_i
	<
	\frac23\sum_{i=1}^4\delta_i
	<
	\frac16.
	\]
	
	Assign each row to its principal coordinate. If some coordinate,
	say the \(j_0\)-th, is not assigned to any row, then
	\[
	\Gamma_{j_0j_0}
	=
	\sum_{i=1}^4(r_i)_{j_0}^2
	\le
	\sum_{i=1}^4h_i^2
	\le
	\left(\sum_{i=1}^4h_i\right)^2
	<
	\frac1{36}.
	\]
	Otherwise, all three coordinates are assigned. Since four rows are
	assigned to three coordinates, the assignment counts must be
	\((2,1,1)\), up to permutation. Choose a coordinate assigned to
	exactly one row. The corresponding diagonal entry satisfies
	\[
	\Gamma_{j_0j_0}
	\le
	1+\sum_{i=1}^4h_i^2
	\le
	1+\left(\sum_{i=1}^4h_i\right)^2
	<
	\frac{37}{36}.
	\]
	In either case, the Rayleigh--Ritz principle  \cite[p.~235, Theorem~4.2.2(c)]{HJ13} yields
	\[
	\lambda_{\min}(\Gamma)
	\le
	e_{j_0}^{\mathsf T}\Gamma e_{j_0}
	=
	\Gamma_{j_0j_0}
	<
	\frac{37}{36},
	\]
	which proves \eqref{eq:lambda-min-four}.
\end{proof}

We can now prove the case $n=4$ in Theorem~\ref{thm:centered}.

\begin{prop}\label{prop:n-four}
	The conclusion of Theorem~\ref{thm:centered} holds for \(n=4\).
\end{prop}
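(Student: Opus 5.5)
The plan is to use the reduction of Lemma~\ref{lem:n4-reduction}. Since \(\|D^\dagger\|_F^2=\frac14\tr\Gamma^{-1}\), the claim is equivalent to
\[
\tr\Gamma^{-1}\ge\frac{12}{5},
\]
where \(\Gamma=M^{\mathsf T}M\) is positive definite and its rows satisfy \(r_i\in\mathcal P_4\). Two facts about \(\Gamma\) are already available. By Lemma~\ref{lem:n4-polytope}, \(\|r_i\|_2\le1\), so \(\tr\Gamma=\sum_i\|r_i\|_2^2\le4\). By Lemma~\ref{lem:n4-determinant}, \(\det\Gamma\le2\). Let \(0<\lambda_1\le\lambda_2\le\lambda_3\) be the eigenvalues of \(\Gamma\). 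I split according to the threshold \(15/4\) used in Lemma~\ref{lem:n4-small-eigenvalue}.

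\emph{Case \(\tr\Gamma\le 15/4\).} The arithmetic–harmonic mean inequality (Cauchy--Schwarz on the eigenvalues) gives
\[
\tr\Gamma^{-1}\ge\frac{9}{\tr\Gamma}\ge\frac{9\cdot4}{15}=\frac{12}{5}.
\]
The threshold \(15/4\) is exactly what makes this case work.

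\emph{Case \(\tr\Gamma>15/4\).} Lemma~\ref{lem:n4-small-eigenvalue} gives \(\lambda_1<37/36\). The determinant bound gives \(\lambda_2\lambda_3=\det\Gamma/\lambda_1\le 2/\lambda_1\). By AM--GM,
\[
\frac1{\lambda_2}+\frac1{\lambda_3}\ge\frac{2}{\sqrt{\lambda_2\lambda_3}}\ge\sqrt{2\lambda_1}.
\]
Hence \(\tr\Gamma^{-1}\ge g(\lambda_1)\), where \(g(t)=t^{-1}+\sqrt{2t}\). Its derivative is \(g'(t)=-t^{-2}+(2t)^{-1/2}\), which is negative for \(t<2^{1/3}\). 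Since \(37/36<2^{1/3}\), \(g\) is decreasing on \((0,37/36]\). Therefore
\[
\tr\Gamma^{-1}>g\!\left(\tfrac{37}{36}\right)=\frac{36}{37}+\frac{\sqrt{74}}{6}.
\]
Numerically this is about \(0.973+1.434>2.4=12/5\). This closes the second case.

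The step needing the most care is the final numerical comparison in the second case. One has to check both that \(g\) is monotone on the relevant range and that \(g(37/36)\) really exceeds \(12/5\). To make this rigorous, I would use \(\sqrt{74}>8.6\), which gives \(\sqrt{74}/6>1.433\), together with \(36/37>0.972\); the sum is then \(>2.405\). Everything else is a direct combination of the three lemmas already proved.
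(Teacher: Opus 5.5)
Your proposal is correct and follows the paper's proof essentially step for step. Like the paper, you reduce to \(\tr\Gamma^{-1}\ge 12/5\), split at \(\tr\Gamma=15/4\), use the arithmetic--harmonic mean inequality in the first case, and combine Lemmas~\ref{lem:n4-small-eigenvalue} and~\ref{lem:n4-determinant} with the monotonicity of \(s\mapsto s^{-1}+\sqrt{2s}\) on \((0,37/36]\) in the second. Your explicit numerical verification that \(36/37+\sqrt{74}/6>12/5\) fills in the check the paper leaves to the reader.
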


\begin{proof}
	Let \(D\) satisfy the hypotheses of
	Theorem~\ref{thm:centered} with \(n=4\), and let
	\(\Gamma\) be the positive-definite matrix supplied by
	Lemma~\ref{lem:n4-reduction}. By \eqref{eq:n4-pseudo},
	it is enough to prove that
	\[
	\tr\Gamma^{-1}\ge\frac{12}{5}.
	\]
	
	Suppose first that
	$\tr\Gamma\le 15/4.$
	The arithmetic--harmonic mean inequality applied to the three
	eigenvalues of \(\Gamma\) gives
	\[
	\tr\Gamma^{-1}
	\ge
	\frac9{\tr\Gamma}
	\ge
	\frac{12}{5}.
	\]
	
	It remains to consider
	$\tr\Gamma>15/4.$
	Let
$
	0<x\le y\le z
$
	be the eigenvalues of \(\Gamma\). By
	Lemma~\ref{lem:n4-small-eigenvalue},
	\[
	x<\frac{37}{36},
	\]
	while Lemma~\ref{lem:n4-determinant} gives
	\[
	xyz=\det\Gamma\le2.
	\]
	Hence
	\[
	yz\le\frac2x.
	\]
	It follows that
	\begin{align*}
		\tr\Gamma^{-1}
		&=
		\frac1x+\frac1y+\frac1z\\
		&\ge
		\frac1x+\frac2{\sqrt{yz}}\\
		&\ge
		\frac1x+\sqrt{2x}\\
		&>
		\frac{36}{37}+\sqrt{\frac{37}{18}}\\
		&>
		\frac{12}{5}.
	\end{align*}
	Here the third inequality is due to the function
$
	s\longmapsto s^{-1}+\sqrt{2s}
$
	is decreasing on \((0,37/36]\), because
$
37/36<2^{1/3}.
$
	The last displayed inequality may be verified directly.
	
	Thus, in both cases,
	\[
	\tr\Gamma^{-1}\ge\frac{12}{5}.
	\]
	Finally, \eqref{eq:n4-pseudo} gives
	\[
	\|D^\dagger\|_F^2
	=
	\frac14\tr\Gamma^{-1}
	\ge
	\frac35
	=
	\frac{n-1}{n+1},
	\]
	as required.
\end{proof}

\subsection{The case \(n\ge6\) in Theorem~\ref{thm:centered}}

We now prove the case \(n\ge6\) in Theorem~\ref{thm:centered}. The proof consists of three steps. We first establish a
quantitative estimate for projected sign matrices. We then extend this
estimate to the entire row polytope by a second-variation argument.
Finally, we convert the resulting spectral-variance estimate into the
required bound for the Moore--Penrose inverse.

\begin{lem}\label{lem:projected-sign}
	Let \(n\ge6\) be even. Let
	$P=I_n-\frac1nJ_n$
	and \(R\in\{-1,1\}^{n\times n}\). Define
	\[
	D=RP,
	\qquad
	G=D^{\mathsf T}D,
	\qquad
	T=\tr G,\qquad
	V
	=
	\tr(G^2)-\frac{T^2}{n-1}.
	\]
	Then
	\begin{equation}\label{eq:projected-sign-gap}
		V
		\ge
		\frac{n^2(n-2)}{n-1}
		\bigl(T-(n^2-1)\bigr).
	\end{equation}
\end{lem}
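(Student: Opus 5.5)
The plan is to reduce everything to integrality of the Gram matrix $M=R^{\mathsf T}R$, using only $G\one=0$ and the parity of the entries of $M$. The steps are: dispose of the easy case by Cauchy--Schwarz, compute $T$ in terms of row sums, settle the balanced case exactly by a lattice argument, and then treat the few unbalanced rows as a perturbation.

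\textbf{Step 1: the easy case.} Since $G=PMP$ and $G\one=0$, $G$ has at most $n-1$ nonzero eigenvalues $\lambda_1,\dots,\lambda_{n-1}$, and
\[
V=\sum_{k=1}^{n-1}\Bigl(\lambda_k-\frac{T}{n-1}\Bigr)^2=\Bigl\|G-\frac{T}{n-1}P\Bigr\|_F^2\ge0 .
\]
Hence \eqref{eq:projected-sign-gap} is trivial whenever $T\le n^2-1$.

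\textbf{Step 2: locating $T$.} Let $r_i^{\mathsf T}$ be the rows of $R$ and $s_i=\one^{\mathsf T}r_i$. Then
\[
T=\sum_i\|Pr_i\|_2^2=n^2-\frac1n\sum_i s_i^2 .
\]
Because $n$ is even, every $s_i$ is even. So $T>n^2-1$ forces $\sum_i s_i^2<n$. In particular, at most $k<n/4$ rows are unbalanced, and $n^2-T=\frac1n\sum_i s_i^2$ is small.

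\textbf{Step 3: the balanced case $T=n^2$.} Here $R\one=0$, so $D=R$ and $G=M$. The entries of $M$ satisfy:
\begin{itemize}
\item the diagonal entries equal $n$;
\item the off-diagonal entries $m_{jk}=n-2\cdot\#\{\text{disagreements}\}$ are even integers;
\item $M\one=0$, so each row's off-diagonal entries sum to $-n$.
\end{itemize}
With $c=T/(n-1)=n^2/(n-1)$, the diagonal of $M-cP$ vanishes. Each off-diagonal entry of $M-cP$ is $m_{jk}+n/(n-1)$.

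For a fixed row $j$, we minimize $\sum_{k\ne j}\bigl(m_{jk}+\frac n{n-1}\bigr)^2$ over even integers with sum $-n$. Since $-n/(n-1)\in(-2,0)$, convexity shows the minimum is attained when every entry lies in $\{0,-2\}$, hence with $n/2$ entries equal to $-2$ and $n/2-1$ entries equal to $0$. A direct computation gives the value $n(n-2)/(n-1)$ per row. Summing over $n$ rows yields $V\ge n^2(n-2)/(n-1)$, which is exactly \eqref{eq:projected-sign-gap} at $T=n^2$. This confirms the constant is the right one.

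\textbf{Step 4: the general case $n^2-1<T<n^2$ (the main obstacle).} Now $M\one=R^{\mathsf T}s=:n\mu$ is nonzero but small, since $\|s\|_2^2<n$ and only $k<n/4$ rows contribute. Write
\[
G_{jk}=m_{jk}-\mu_j-\mu_k+\bar\mu,
\qquad
\bar\mu=\frac1n\one^{\mathsf T}\mu,
\]
and compare $V=\|G-cP\|_F^2$ with $c=T/(n-1)$ to the lattice problem of Step~3. The off-diagonal entries are still even integers shifted by the smooth quantities $\mu_j+\mu_k-\bar\mu+c/n$. The constrained row sums become $-n+n\mu_j$ up to the same shifts.

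I would redo the per-row minimization of Step~3 with these perturbed targets and row sums. The goal is to show that the loss relative to $n(n-2)/(n-1)$ is at most linear in $\sum_i s_i^2/n=n^2-T$, with the loss summed over rows bounded by $\frac{n^2(n-2)}{n-1}(n^2-T)$. Two facts should give this:
\begin{itemize}
\item the per-row minimum, viewed as a function of the shift and the target sum, is piecewise quadratic with slope bounded by $O(n)$ near the balanced configuration;
\item $|\mu_j|\le\frac1n\sum_i|s_i|$ together with $\sum_i s_i^2<n$ keeps us in the regime where the same $\{0,-2\}$ pattern stays optimal.
\end{itemize}
Controlling this perturbation precisely, particularly the cross terms between $\mu$ and the lattice deviations, is the step I expect to be hardest. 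If the direct expansion becomes unwieldy, I would fall back on splitting $R$ into its balanced rows and its $k$ unbalanced rows, and then bounding the contribution of the latter by a rank-$k$ correction in Frobenius norm.
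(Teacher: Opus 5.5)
Your Steps 1--3 are sound:
\begin{itemize}
\item $V=\|G-\tfrac{T}{n-1}P\|_F^2\ge0$ settles the case $T\le n^2-1$;
\item the formula $T=n^2-\tfrac1n\sum_i s_i^2$ is correct;
\item the per-row minimization over even integers gives the sharp value $V\ge n^2(n-2)/(n-1)$ at $T=n^2$.
\end{itemize}

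The gap is Step~4, which is a plan rather than an argument. The range it must cover is nonempty: for every even $n\ge6$, one row with $s_i=\pm2$ and all other rows balanced gives $T=n^2-4/n\in(n^2-1,n^2)$. The two facts you rely on for the perturbation are not established, and they are doubtful as stated. Your own bounds give only $|\mu_j|<\tfrac12$, so the shifts $\mu_j+\mu_k-\bar\mu$ of the row targets can be of order $1$, comparable to the lattice spacing $2$. Nothing then guarantees that the $\{0,-2\}$ pattern stays optimal row by row. You also give no slope estimate that would bound the total loss by $\frac{n^2(n-2)}{n-1}(n^2-T)$, and the rank-$k$ fallback is not quantified.

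The missing idea is to run the integrality argument on $N=R^{\mathsf T}R$ instead of on $G$, and to handle the projection by an exact identity rather than entrywise.
\begin{itemize}
\item The off-diagonal entries of $N$ are even integers whether or not $R$ is balanced. So $N_{jk}(N_{jk}+2)\ge0$, which is exactly what your Step~3 convexity argument encodes, yields globally $\tr(N^2)\ge n^3-2\sum_{j\ne k}N_{jk}=n^3+2n^2-2n\delta$, where $\delta=\|R\one\|_2^2/n=n^2-T$.
\item The projection enters only through $\tr(G^2)=\tr(NPNP)=\tr(N^2)-\tfrac2n\|N\one\|_2^2+\delta^2$.
\item Since $\rho=R\one$ has even entries, its support has size at most $n\delta/4$. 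Cauchy--Schwarz then gives $\|N\one\|_2^2=\|R^{\mathsf T}\rho\|_2^2\le n^3\delta^2/4$.
\end{itemize}
So the cross terms you were worried about cost only $O(n^2\delta^2)$. Substituting $T=n^2-\delta$ reduces the claim to
\[
\delta\bigl[n(n^2-2n+2)-\bigl(\tfrac{n^2(n-1)}{2}-n+2\bigr)\delta\bigr]\ge0,
\]
which holds for $0\le\delta<1$. This is how the paper proceeds, and your Step~3 is exactly its $\delta=0$ case.
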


\begin{proof}
	Set
	\[
	\rho=R\one,
	\qquad
	\delta=\frac{\|\rho\|_2^2}{n},
	\qquad
	N=R^{\mathsf T}R.
	\]
Then
	\[
	G
	=
	D^{\mathsf T}D
	=
	PR^{\mathsf T}RP
	=
	PNP.
	\]Since \(G\ge0\) and \(G\one=0\), the Cauchy--Schwarz
	inequality applied to the eigenvalues of
	\(G|_{\one^\perp}\) gives
	\[
	T^2\le (n-1)\tr(G^2).
	\]
	Consequently,
	\[
	V=\tr(G^2)-\frac{T^2}{n-1}\ge0.
	\]
	
We first compute \(T=\tr G\):
	\begin{equation*}
		T=
		\tr(PNP)\notag
		=
		\tr(NP)\notag
		=
		\tr N-\frac1n\one^{\mathsf T}N\one.
	\end{equation*}
Since
$
	\tr N
	=
	\|R\|_F^2
	=
	n^2
$ and
$
	\one^{\mathsf T}N\one
	=
	\|R\one\|_2^2
	=
	\|\rho\|_2^2
	=
	n\delta,
$
	\begin{equation}\label{eq:T-delta}
		T=n^2-\delta.
	\end{equation}
	
	If \(\delta\ge1\), then
	\[
	T-(n^2-1)=1-\delta\le0.
	\]
	Since \(V\ge0\), inequality
	\eqref{eq:projected-sign-gap} follows immediately. 
	
Assume now that \(0\le\delta<1\). We next obtain a lower bound for \(\tr(N^2)\). Let \(R^{(j)}\) denote
	the \(j\)-th column of \(R\). Since \(N=R^{\mathsf T}R\),
	\[
	N_{jk}
	=
	\left\langle R^{(j)},R^{(k)}\right\rangle.
	\]
	For \(j=k\), every entry of \(R^{(j)}\) has modulus \(1\), and hence
	\[
	N_{jj}=n.
	\]
	Now let \(j\ne k\). Suppose that the \(j\)-th and \(k\)-th columns of
	\(R\) agree in \(a\) positions and disagree in \(b\) positions. Then
$
	a+b=n
$
	and
	\[
	N_{jk}=a-b=n-2b.
	\]
	Because \(n\) is even, \(N_{jk}\) is an even integer. We claim that
	\[
	N_{jk}^2\ge-2N_{jk}.
	\]
	Indeed, this is immediate when \(N_{jk}\ge0\). If \(N_{jk}<0\), then
	the evenness of \(N_{jk}\) implies \(N_{jk}\le-2\), and therefore
	\[
	N_{jk}^2
	=
	|N_{jk}|^2
	\ge
	2|N_{jk}|
	=
	-2N_{jk}.
	\]
	Consequently,
	\begin{equation}\label{eq:N-square-first}
		\tr(N^2)
		=
		\sum_{j,k=1}^nN_{jk}^2
		=
		n^3+\sum_{j\ne k}N_{jk}^2\ge
		n^3-2\sum_{j\ne k}N_{jk}.
	\end{equation}
Moreover, \begin{equation*} \sum_{j\ne k}N_{jk} = \sum_{j,k=1}^n N_{jk} - \sum_{j=1}^n N_{jj} = \one^{\mathsf T}N\one-\tr N= n\delta-n^2. \end{equation*}
	Substitution into \eqref{eq:N-square-first} gives
	\begin{equation}\label{eq:N-square}
		\tr(N^2)
		\ge
		n^3+2n^2-2n\delta.
	\end{equation}
	We also need an upper bound for \(\|N\one\|_2\). Since
	\(\rho=R\one\), every entry of \(\rho\) is a sum of \(n\) numbers in
	\(\{-1,1\}\). Because \(n\) is even, every entry of \(\rho\) is an
	even integer.
	Let
$
	m=\bigl|\{i:\rho_i\ne0\}\bigr|.
$
	Every nonzero entry of \(\rho\) has modulus at least \(2\), so
$
	4m
	\le
	\|\rho\|_2^2
	=
	n\delta.
$
	Thus
	\begin{equation}\label{eq:support-rho}
		m\le\frac{n\delta}{4}.
	\end{equation}
	For each \(j\),
	\[
	(N\one)_j
	=
	(R^{\mathsf T}\rho)_j
	=
	\sum_{i:\rho_i\ne0}R_{ij}\rho_i.
	\]
	The Cauchy--Schwarz inequality and \eqref{eq:support-rho} give
	\begin{equation*}
		\bigl((N\one)_j\bigr)^2
		\le
		m\sum_{i:\rho_i\ne0}\rho_i^2
		=
		m\|\rho\|_2^2\le
		\frac{n^2\delta^2}{4}.
	\end{equation*}
	Summing over \(j\), we obtain
	\begin{equation}\label{eq:None-bound}
		\|N\one\|_2^2
		\le
		\frac{n^3\delta^2}{4}.
	\end{equation}
	We now pass from \(N\) to \(G=PNP\). Expanding
	\(P=I_n-J_n/n\) and using cyclicity of the trace yields
	\begin{align}
		\tr(G^2)
		&=
		\tr(NPNP)\notag\\
		&=
		\tr(N^2)
		-\frac2n\one^{\mathsf T}N^2\one
		+\frac1{n^2}
		\bigl(\one^{\mathsf T}N\one\bigr)^2\notag\\
		&=
		\tr(N^2)
		-\frac2n\|N\one\|_2^2
		+\delta^2.
		\label{eq:PS-square}
	\end{align}
	Combining \eqref{eq:N-square}, \eqref{eq:None-bound}, and
	\eqref{eq:PS-square}, we obtain
	\begin{equation}\label{eq:G-square-sign}
		\tr(G^2)
		\ge
		n^3+2n^2-2n\delta
		-\left(\frac{n^2}{2}-1\right)\delta^2.
	\end{equation}
	Finally, by the definitions of \(T\) and \(V\),
	\[
	(n-1)V
	=
	(n-1)\tr(G^2)-T^2.
	\]
	Using \eqref{eq:T-delta} and \eqref{eq:G-square-sign}, a direct
	simplification gives
	\begin{align}
		&(n-1)
		\left[
		V-
		\frac{n^2(n-2)}{n-1}
		\bigl(T-(n^2-1)\bigr)
		\right]\notag\\
		&\quad\ge
		\delta
		\left[
		n(n^2-2n+2)
		-
		\left(
		\frac{n^2(n-1)}2-n+2
		\right)\delta
		\right].
		\label{eq:projected-final}
	\end{align}
	The coefficient
	\[
	\frac{n^2(n-1)}2-n+2
	\]
	is positive. Since \(0\le\delta<1\), the expression in square
	brackets in \eqref{eq:projected-final} is bounded below by
	\begin{equation*}
		n(n^2-2n+2)
		-
		\left(
		\frac{n^2(n-1)}2-n+2
		\right)=
		\frac{(n-1)(n^2-2n+4)}2
		>0.
	\end{equation*}
	The right-hand side of \eqref{eq:projected-final} is therefore
	nonnegative, which proves \eqref{eq:projected-sign-gap}.
\end{proof}

We isolate the second-variation computation used in the extension from
projected sign matrices to arbitrary admissible matrices.

\begin{lem}\label{lem:row-second-variation}
	Let \(n\ge6\), and let \(D\in\mathbb{R}^{n\times n}\) satisfy
	\(D\one=0\). Define
	\[
	G=D^{\mathsf T}D,
	\qquad
	T=\tr G,
	\]
	and
	\[
	\Phi(D)
	=
	\tr(G^2)-\frac{T^2}{n-1}
	-
	\frac{n^2(n-2)}{n-1}
	\bigl(T-(n^2-1)\bigr).
	\]
	Fix \(x,z\in\one^\perp\), and suppose that \(x^{\mathsf T}\) is
	one of the rows of \(D\). For \(\tau\in\mathbb{R}\), let
	\(D_\tau\) be obtained from \(D\) by replacing the row
	\(x^{\mathsf T}\) with \((x+\tau z)^{\mathsf T}\). Then
	\begin{equation}\label{eq:row-second-variation}
		\begin{aligned}
			\frac14
			\left.
			\frac{d^2}{d\tau^2}
			\right|_{\tau=0}
			\Phi(D_\tau)
			={}&
			z^{\mathsf T}Gz
			+
			\left(1-\frac{2}{n-1}\right)
			(x^{\mathsf T}z)^2\\
			&+
			\left(
			\|x\|_2^2
			-\frac{T}{n-1}
			-\frac{n^2(n-2)}{2(n-1)}
			\right)
			\|z\|_2^2.
		\end{aligned}
	\end{equation}
\end{lem}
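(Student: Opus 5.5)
The plan is a direct second-order Taylor expansion. Replacing the row \(x^{\mathsf T}\) of \(D\) by \((x+\tau z)^{\mathsf T}\) changes \(G=D^{\mathsf T}D=\sum_i d_id_i^{\mathsf T}\) only in the summand \(xx^{\mathsf T}\). Hence
\[
G_\tau:=D_\tau^{\mathsf T}D_\tau=G+\tau E+\tau^2F,
\qquad
E=xz^{\mathsf T}+zx^{\mathsf T},\qquad F=zz^{\mathsf T}.
\]
Since \(\Phi(D_\tau)\) is a polynomial in \(\tau\), it suffices to find the second derivative at \(\tau=0\) of each of the three pieces \(\tr(G_\tau^2)\), \(T_\tau^2\) and \(T_\tau\). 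The hypothesis \(x,z\in\one^\perp\) only guarantees that \(D_\tau\one=0\), so that \(D_\tau\) stays in the admissible class. It plays no role in the computation itself.

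\textbf{Step 1 (the trace).} We have \(T_\tau=\tr G_\tau=T+2\tau\,x^{\mathsf T}z+\tau^2\|z\|_2^2\). Hence \(T'(0)=2x^{\mathsf T}z\) and \(T''(0)=2\|z\|_2^2\). The affine term \(-\frac{n^2(n-2)}{n-1}(T-(n^2-1))\) therefore contributes \(-\frac{2n^2(n-2)}{n-1}\|z\|_2^2\).

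\textbf{Step 2 (the square of the trace).} We get \((T_\tau^2)''(0)=2T'(0)^2+2T\,T''(0)=8(x^{\mathsf T}z)^2+4T\|z\|_2^2\).

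\textbf{Step 3 (the quadratic trace).} Expanding \(\tr(G_\tau^2)\) and keeping the \(\tau^2\) coefficient gives \(\tr(E^2)+2\tr(GF)\), so
\[
\bigl(\tr G_\tau^2\bigr)''(0)=2\tr(E^2)+4\tr(GF).
\]
By cyclicity, \(\tr(GF)=z^{\mathsf T}Gz\). Also
\[
\tr(E^2)=2(x^{\mathsf T}z)^2+2\|x\|_2^2\|z\|_2^2,
\]
because \(\tr(xz^{\mathsf T}xz^{\mathsf T})=(x^{\mathsf T}z)^2\) and \(\tr(xz^{\mathsf T}zx^{\mathsf T})=\|x\|_2^2\|z\|_2^2\). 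Thus this piece equals \(4z^{\mathsf T}Gz+4(x^{\mathsf T}z)^2+4\|x\|_2^2\|z\|_2^2\).

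\textbf{Step 4 (assembly).} Combining the three pieces,
\[
\Phi''(0)=4z^{\mathsf T}Gz+4(x^{\mathsf T}z)^2+4\|x\|_2^2\|z\|_2^2-\frac{8(x^{\mathsf T}z)^2+4T\|z\|_2^2}{n-1}-\frac{2n^2(n-2)}{n-1}\|z\|_2^2.
\]
Dividing by \(4\) and grouping the \((x^{\mathsf T}z)^2\) and \(\|z\|_2^2\) terms gives exactly \eqref{eq:row-second-variation}. There is no real obstacle here. The only point needing care is the cross term \(\tr(E^2)\), where both orderings \(xz^{\mathsf T}xz^{\mathsf T}\) and \(xz^{\mathsf T}zx^{\mathsf T}\) must be counted. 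This is what produces the coefficient \(1\) on \((x^{\mathsf T}z)^2\) before the \(-\frac{2}{n-1}\) correction coming from \(T^2\).
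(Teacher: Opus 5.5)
Your proof is correct and follows the paper's route exactly: expand $D_\tau^{\mathsf T}D_\tau=G+\tau(xz^{\mathsf T}+zx^{\mathsf T})+\tau^2zz^{\mathsf T}$ and $T_\tau$, compute the second derivatives of $\tr(G_\tau^2)$, $T_\tau^2$ and $T_\tau$ at $\tau=0$, and substitute into $\Phi$. Your explicit evaluation of $\tr(E^2)$ and $\tr(GF)$ simply fills in the ``direct differentiation'' step that the paper leaves to the reader.
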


\begin{proof}
	Only one row of \(D\) is changed. Therefore
	\[
	D_\tau^{\mathsf T}D_\tau
	=
	G
	+\tau(xz^{\mathsf T}+zx^{\mathsf T})
	+\tau^2zz^{\mathsf T}.
	\]
	If
$
	T_\tau=\tr(D_\tau^{\mathsf T}D_\tau),
$
	then
	\[
	T_\tau
	=
	T+2\tau x^{\mathsf T}z+\tau^2\|z\|_2^2.
	\]
	
	Direct differentiation gives
	\[
	\left.
	\frac{d^2}{d\tau^2}
	\right|_{\tau=0}
	\tr\bigl((D_\tau^{\mathsf T}D_\tau)^2\bigr)
	=
	4z^{\mathsf T}Gz
	+4\|x\|_2^2\|z\|_2^2
	+4(x^{\mathsf T}z)^2
	\]
	and
	\[
	\left.
	\frac{d^2}{d\tau^2}
	\right|_{\tau=0}
	T_\tau^2
	=
	8(x^{\mathsf T}z)^2
	+4T\|z\|_2^2.
	\]
	Also,
	\[
	\left.
	\frac{d^2}{d\tau^2}
	\right|_{\tau=0}
	T_\tau
	=
	2\|z\|_2^2.
	\]
	Substituting these three identities into the definition of
	\(\Phi(D_\tau)\) gives \eqref{eq:row-second-variation}.
\end{proof}

We now extend Lemma~\ref{lem:projected-sign} to every matrix whose rows
belong to the row polytope.

\begin{lem}\label{lem:frame-gap}
	Let \(n\ge6\) be even, and define
	\[
	\mathcal{K}_n
	=
	\left\{
	x\in\mathbb{R}^n:
	\one^{\mathsf T}x=0
	\ \text{and}\
	\osc(x)\le2
	\right\}.
	\]
Let \(D\in\mathbb{R}^{n\times n}\) have rows
\(d_1^{\mathsf T},\ldots,d_n^{\mathsf T}\), where
\(d_1,\ldots,d_n\in\mathcal{K}_n\). Define
	\[
	G=D^{\mathsf T}D,
	\qquad
	T=\tr G,
	\qquad
	V
	=
	\tr(G^2)-\frac{T^2}{n-1}.
	\]
	Then
	\begin{equation}\label{eq:frame-gap}
		V
		\ge
		\frac{n^2(n-2)}{n-1}
		\bigl(T-(n^2-1)\bigr).
	\end{equation}
\end{lem}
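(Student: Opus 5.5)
The goal is to prove $\Phi(D)\ge 0$ for every $D$ whose rows lie in $\cK_n$, where $\Phi$ is the functional of Lemma~\ref{lem:row-second-variation}. The set $\cK_n^n$ of admissible matrices is compact, since each $\cK_n$ is a compact polytope by Lemma~\ref{lem:row-polytope}, and $\Phi$ is continuous. Hence $\Phi$ attains its minimum at some $D^\ast$. I argue by contradiction: suppose $\Phi(D^\ast)<0$. Since $V\ge0$ always holds (by Cauchy--Schwarz on the eigenvalues of $G|_{\one^\perp}$, exactly as in Lemma~\ref{lem:projected-sign}), a negative value forces $T>n^2-1$. The plan is to show that at such a minimizer every row is a vertex of $\cK_n$, i.e.\ of the form $Py$ with $y\in\{-1,1\}^n$. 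Then $D^\ast=RP$ for a sign matrix $R$, and Lemma~\ref{lem:projected-sign} gives $\Phi(D^\ast)\ge0$, a contradiction.

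\textbf{Step 1: large trace forces large rows.} By \eqref{eq:row-norm}, $\|d_i\|_2^2\le n$ for every row. Then $T>n^2-1$ gives $\|d_i\|_2^2>n-1$ for every $i$. In particular each row has oscillation close to $2$ and nearly all coordinates close to the extremes $\pm1$.

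\textbf{Step 2: rowwise second variation.} Fix a row $x=d_i$. Suppose $x$ is not a vertex of $\cK_n=P[-1,1]^n$. Then there is a nonzero $z\in\one^\perp$ with $x\pm\tau z\in\cK_n$ for small $\tau$. Minimality then forces $\frac{d^2}{d\tau^2}\Phi(D_\tau)|_{\tau=0}\ge0$, because $\Phi(D_\tau)$ is a quartic polynomial in $\tau$ with vanishing first derivative at an interior minimum. I will show this second derivative is instead strictly negative for a suitably chosen $z$, using \eqref{eq:row-second-variation}.

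The dominant term is $-\frac{n^2(n-2)}{2(n-1)}\|z\|^2$. Against it I need to bound $z^{\mathsf T}Gz+\frac{n-3}{n-1}(x^{\mathsf T}z)^2+(\|x\|^2-\frac{T}{n-1})\|z\|^2$. The pieces are handled as follows.
\begin{itemize}
\item Since $\|x\|^2\le n$ and $T>n^2-1$, the coefficient $\|x\|^2-\frac{T}{n-1}$ is about $-1$, which is harmless.
\item The term $(x^{\mathsf T}z)^2$ is at most $n\|z\|^2$.
\item The term $z^{\mathsf T}Gz$ is the dangerous one: $\lambda_{\max}(G)$ could be as large as about $n^2$.
\end{itemize}
I therefore choose $z$ carefully. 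The non-vertex condition means at least two coordinates of $x$ lie strictly between $\min x$ and $\max x$, so I can take $z=e_j-e_k$ supported on such interior coordinates. Then $z^{\mathsf T}Gz=\|D(e_j-e_k)\|^2=\sum_i (d_{ij}-d_{ik})^2\le 4n$ by the oscillation bound. With this choice the second variation is at most $\bigl(2n+\frac{n-3}{n-1}\cdot 4+ O(1)\bigr)\|z\|^2\cdot\frac{1}{\|z\|^2}$ scaled against $-\frac{n^2(n-2)}{2(n-1)}\cdot 2$. This is negative for $n\ge6$; the threshold is exactly where the arithmetic should close.

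If a row has only one interior coordinate, the zero-sum constraint still allows moving it together with an endpoint block. This is a perturbation that translates the two extreme values, and the same kind of estimate applies. I expect this degenerate configuration, together with confirming that vertices of $\cK_n$ are exactly the rows with all coordinates at $\min$ or $\max$ with oscillation $2$, to be the main technical obstacle. Vertices with oscillation $<2$ must also be excluded, via the scaling direction $z=x$. There $\frac{d}{d\tau}\Phi$ is the relevant quantity, since $T$ increases and $\Phi$ should decrease. Alternatively, such rows can be excluded by Step 1 directly.

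\textbf{Step 3: conclusion.} Once every row of $D^\ast$ has all coordinates equal to $c\pm1$, we get $d_i=Py_i$ with $y_i\in\{-1,1\}^n$. So $D^\ast=RP$, and Lemma~\ref{lem:projected-sign} yields $\Phi(D^\ast)\ge0$, contradicting $\Phi(D^\ast)<0$. Therefore $\Phi\ge0$ on all admissible $D$, which is \eqref{eq:frame-gap}.
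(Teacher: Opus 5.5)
Your framework is the paper's: take a minimizer $D^\ast$ with $\Phi(D^\ast)<0$, use a rowwise second variation to force every row to be an extreme point of $\cK_n$, then apply Lemma~\ref{lem:projected-sign}. Your direction $z=e_j-e_k$ is a genuinely different and valid device where it applies.

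\textbf{Where $z=e_j-e_k$ works.} It is admissible in both directions when $x_j,x_k$ lie strictly between $\min x$ and $\max x$. It is also admissible for any $j\ne k$ when $\osc(x)<2$. Such rows are relative interior points of $\cK_n$, not ``vertices with oscillation $<2$''. Step~1 cannot exclude them, since $\|x\|_2^2>n-1$ only forces $\osc(x)>2\sqrt{1-1/n}$. For this $z$ you have
\begin{itemize}
\item $z^{\mathsf T}Gz=\sum_i(d_{ij}-d_{ik})^2\le 4n$,
\item $(x^{\mathsf T}z)^2\le 4$,
\item $\|x\|_2^2-\frac{T}{n-1}<-1$.
\end{itemize}
Hence $(n-1)\cdot\frac14\phi''(0)<-n^3+6n^2-2n-10<0$ for $n\ge6$. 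This bypasses the paper's spectral bound on $\lambda_{\max}(G_\ast)$ and its separate argument that $\varepsilon<1$.

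\textbf{The gap.} It is the case you defer: $\osc(x)=2$ with exactly one coordinate $x_j$ strictly inside.
\begin{itemize}
\item The active constraints force $z$ to be constant off $j$. So the only admissible direction is, up to scale, $z=\one-ne_j$.
\item Then $Dz=-nDe_j$, and $z^{\mathsf T}Gz/\|z\|_2^2=\frac{n}{n-1}\sum_i d_{ij}^2$ is a full column norm, not a difference of two columns.
\item Oscillation alone gives only $|d_{ij}|\le\frac{2(n-1)}{n}$, i.e.\ a Rayleigh quotient up to $4(n-1)$.
\item At $n=6$ this is $20$ against $\frac{n^2(n-2)}{2(n-1)}=14.4$, so your upper bound for $\phi''(0)$ stays positive. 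At $n=8$ it is still positive once the $(x^{\mathsf T}z)^2$ term is counted.
\end{itemize}
Thus ``the same kind of estimate'' does not close, and you must use $\Phi(D^\ast)<0$ quantitatively here.

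\textbf{How to close it.} The paper does this uniformly in $z$. From $V_\ast<\frac{n^2(n-2)}{n-1}\varepsilon$ it gets $\lambda_{\max}(G_\ast)<\frac{T_\ast+n(n-2)\sqrt{\varepsilon}}{n-1}$. This makes $\phi''(0)<0$ for every admissible $z$ at once, with $\varepsilon<1$ needed at $n=6$.

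Alternatively, you can stay within your approach and bound the column directly:
\begin{itemize}
\item The deficits $\delta_i=n-\|d_i\|_2^2$ satisfy $\sum_i\delta_i=n^2-T<1$.
\item The argument behind \eqref{eq:row-norm} gives $\max_k d_{ik}^2\le(1+\sqrt{\delta_i/n})^2$.
\item Cauchy--Schwarz then yields $\sum_i d_{ij}^2<n+2+\frac1n$.
\end{itemize}
With this column bound, the second variation along $\one-ne_j$ is negative for every even $n\ge6$.
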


\begin{proof}
	For every matrix \(X\in\mathbb{R}^{n\times n}\) whose rows belong to
	\(\mathcal{K}_n\), define
	\[
	G(X)=X^{\mathsf T}X,
	\qquad
	T(X)=\tr G(X),
	\]
	and
	\[
	V(X)
	=
	\tr\bigl(G(X)^2\bigr)
	-
	\frac{T(X)^2}{n-1}.
	\]
	We also define
	\[
	\Phi(X)
	=
	V(X)
	-
	\frac{n^2(n-2)}{n-1}
	\bigl(T(X)-(n^2-1)\bigr).
	\]
	For the matrix \(D\) in the statement of the lemma,
	\eqref{eq:frame-gap} is precisely the assertion that
	\[
	\Phi(D)\ge0.
	\]
	
	Suppose, for a contradiction, that \(\Phi\) takes a negative value on
	the set of matrices whose rows belong to \(\mathcal{K}_n\). By
	Lemma~\ref{lem:row-polytope}, the set \(\mathcal{K}_n\) is compact.
	Hence the set of such matrices is compact, being the Cartesian product
	of \(n\) copies of \(\mathcal{K}_n\). Since \(\Phi\) is continuous,
	it attains its global minimum on this set.
	
	Let \(X_\ast\) be an admissible matrix at which this minimum is
	attained. By the contradiction assumption,
	\[
	\Phi(X_\ast)<0.
	\]
	For convenience, set
$
	G_\ast=X_\ast^{\mathsf T}X_\ast,
	T_\ast=\tr G_\ast,
$
	and
$
	V_\ast
	=
	\tr(G_\ast^2)
	-
	\frac{T_\ast^2}{n-1}.
$
Let \(x_1^{\mathsf T},\ldots,x_n^{\mathsf T}\) denote the rows of
\(X_\ast\).
Since every \(x_i\) belongs to \(\mathcal{K}_n\),
	Lemma~\ref{lem:row-polytope} gives
	\[
	\|x_i\|_2^2\le n
	\qquad (1\le i\le n).
	\]
	Consequently,
	\[
	T_\ast
	=
	\|X_\ast\|_F^2
	=
	\sum_{i=1}^n\|x_i\|_2^2
	\le
	n^2.
	\]
	
	Since \(X_\ast\one=0\), the matrix \(G_\ast\) annihilates \(\one\).
	If \(\lambda_1,\ldots,\lambda_{n-1}\) are the eigenvalues of
	\(G_\ast|_{\one^\perp}\), including zero eigenvalues when necessary,
	then
	\[
	T_\ast
	=
	\sum_{i=1}^{n-1}\lambda_i, \qquad
	V_\ast
	=
	\sum_{i=1}^{n-1}
	\left(
	\lambda_i-\frac{T_\ast}{n-1}
	\right)^2.
	\]
	In particular,
	\[
	V_\ast\ge0.
	\]
	
	Since
	\[
	\Phi(X_\ast)
	=
	V_\ast
	-
	\frac{n^2(n-2)}{n-1}
	\bigl(T_\ast-(n^2-1)\bigr)
	<0,
	\]
	we must have
	\[
	T_\ast>n^2-1.
	\]
	We may therefore write
	\begin{equation}\label{eq:epsilon-definition}
		T_\ast=n^2-1+\varepsilon,
		\qquad
		0<\varepsilon\le1.
	\end{equation}
	
We next show that \(\varepsilon<1\). Suppose that
	\(\varepsilon=1\). Then \(T_\ast=n^2\), and hence
	\[
	\sum_{i=1}^n\|x_i\|_2^2=n^2.
	\]
	Since every summand is at most \(n\), it follows that
	\[
	\|x_i\|_2^2=n
	\qquad (1\le i\le n).
	\]
	By the equality statement in Lemma~\ref{lem:row-polytope}, every row
	of \(X_\ast\) is a balanced sign vector. Thus
	\[
	X_\ast\in\{-1,1\}^{n\times n}
	\qquad\text{and}\qquad
	X_\ast\one=0.
	\]
	
Set
$
	P=I_n-\frac1nJ_n.
$
	Since \(X_\ast\one=0\), we have
	\[
	X_\ast P=X_\ast.
	\]
	Therefore, taking \(R=X_\ast\), we may write
	\[
	X_\ast=RP
	\qquad\text{with}\qquad
	R\in\{-1,1\}^{n\times n}.
	\]
	Lemma~\ref{lem:projected-sign} then gives
	\[
	\Phi(X_\ast)\ge0,
	\]
	contradicting the choice of \(X_\ast\). Hence
	\begin{equation}\label{eq:epsilon-strict}
		0<\varepsilon<1.
	\end{equation}
	
	We now estimate the largest eigenvalue of
	\(G_\ast|_{\one^\perp}\). Let
$
	\lambda_{\max}
	=
	\max_{1\le i\le n-1}\lambda_i
$
	and set
$
	\eta
	=
	\lambda_{\max}
	-
T_\ast/(n-1).
$
	The remaining \(n-2\) deviations from the mean
	\(T_\ast/(n-1)\) have sum \(-\eta\). By the Cauchy--Schwarz
	inequality, the sum of their squares is at least
$
	\eta^2/(n-2).
$
	It follows that
	\[
	V_\ast
	\ge
	\eta^2+\frac{\eta^2}{n-2}
	=
	\frac{n-1}{n-2}\eta^2.
	\]
	Consequently,
	\[
	\lambda_{\max}
	\le
	\frac{T_\ast}{n-1}
	+
	\sqrt{\frac{n-2}{n-1}\,V_\ast}.
	\]
	
	Since \(\Phi(X_\ast)<0\), equation
	\eqref{eq:epsilon-definition} gives
	\[
	V_\ast
	<
	\frac{n^2(n-2)}{n-1}\varepsilon.
	\]
	Therefore,
	\begin{equation}\label{eq:lambda-max}
		\lambda_{\max}
		<
		\frac{
			T_\ast+n(n-2)\sqrt{\varepsilon}
		}{n-1}.
	\end{equation}
	
	We next prove that every row of \(X_\ast\) is an extreme point of
	\(\mathcal{K}_n\). Fix \(x\in\cK_n\) such that \(x^{\mathsf T}\) is a row of
	\(X_\ast\), and suppose that \(x\) is not an extreme point of
	\(\cK_n\). Then \(x\) lies
	in the relative interior of a nontrivial line segment contained in
	\(\mathcal{K}_n\). After rescaling the direction of this segment,
	there exists a nonzero vector \(z\in\one^\perp\) such that
	\[
	x+\tau z\in\mathcal{K}_n
	\qquad (-1\le\tau\le1).
	\]
	
	For \(-1\le\tau\le1\), let \(X_\tau\) be obtained from \(X_\ast\)
	by replacing the row \(x^{\mathsf T}\) with
	\((x+\tau z)^{\mathsf T}\), and define
	\[
	\phi(\tau)=\Phi(X_\tau).
	\]
	Every \(X_\tau\) is admissible. Since \(X_\ast\) is a global
	minimizer of \(\Phi\), the function \(\phi\) has a local minimum at
	\(\tau=0\). Hence
	\begin{equation}\label{eq:second-variation-nonnegative}
		\phi''(0)\ge0.
	\end{equation}
	
	On the other hand, applying
	Lemma~\ref{lem:row-second-variation} to \(X_\ast\) gives
	\begin{align*}
		\frac14\phi''(0)
		={}&
		z^{\mathsf T}G_\ast z
		+
		\left(1-\frac{2}{n-1}\right)(x^{\mathsf T}z)^2\\
		&+
		\left(
		\|x\|_2^2
		-\frac{T_\ast}{n-1}
		-\frac{n^2(n-2)}{2(n-1)}
		\right)
		\|z\|_2^2.
	\end{align*}
	
	Since \(z\in\one^\perp\), the Rayleigh--Ritz principle \cite[p.~235, Theorem~4.2.2(c)]{HJ13} gives
	\[
	z^{\mathsf T}G_\ast z
	\le
	\lambda_{\max}\|z\|_2^2.
	\]
	Moreover,
	\[
	(x^{\mathsf T}z)^2
	\le
	\|x\|_2^2\|z\|_2^2,
	\]
	and Lemma~\ref{lem:row-polytope} gives
	\[
	\|x\|_2^2\le n.
	\]
	Since
$
	1-2/(n-1)>0,
$
	we obtain
	\begin{align*}
		\frac14\phi''(0)
		&\le
		\left[
		\lambda_{\max}
		+
		\left(2-\frac{2}{n-1}\right)\|x\|_2^2
		-\frac{T_\ast}{n-1}
		-\frac{n^2(n-2)}{2(n-1)}
		\right]
		\|z\|_2^2\\
		&\le
		\left[
		\lambda_{\max}
		+
		\frac{2n(n-2)}{n-1}
		-\frac{T_\ast}{n-1}
		-\frac{n^2(n-2)}{2(n-1)}
		\right]
		\|z\|_2^2.
	\end{align*}
	Using \eqref{eq:lambda-max}, we find that
	\begin{equation*}
		\frac14\phi''(0)
		<
		\frac{n(n-2)}{n-1}
		\left(
		2+\sqrt{\varepsilon}-\frac n2
		\right)
		\|z\|_2^2.
	\end{equation*}
	
	If \(n\ge8\), then, by \eqref{eq:epsilon-strict},
	\[
	2+\sqrt{\varepsilon}-\frac n2
	<
	3-\frac n2
	\le-1.
	\]
	If \(n=6\), then
	\[
	2+\sqrt{\varepsilon}-\frac n2
	=
	\sqrt{\varepsilon}-1
	<0.
	\]
	Thus, in every case,
	\[
	\phi''(0)<0.
	\]
	This contradicts
	\eqref{eq:second-variation-nonnegative}. We conclude that every row
	of \(X_\ast\) is an extreme point of \(\mathcal{K}_n\).
	
	It remains to identify the form of these extreme points. By
	Lemma~\ref{lem:row-polytope},
	\[
	\mathcal{K}_n=P[-1,1]^n,
	\qquad
	P=I_n-\frac1nJ_n.
	\]
	Let \(x\) be an extreme point of \(\mathcal{K}_n\). Since
	\(\mathcal{K}_n\) is a polytope, there exists a linear functional
	\(\ell\) that is uniquely maximized over \(\mathcal{K}_n\) at \(x\).
	
	The linear functional
$
	s\longmapsto\ell(Ps)
$
	attains its maximum over the cube \([-1,1]^n\) at some vertex
$
	s\in\{-1,1\}^n.
$
	Thus \(Ps\) maximizes \(\ell\) over \(\mathcal{K}_n\). By the
	uniqueness of the maximizer,
	\[
	x=Ps.
	\]
	
	Applying this argument to every row of \(X_\ast\), choose sign vectors
	\(s_1,\ldots,s_n\in\{-1,1\}^n\) such that the \(i\)-th row of
	\(X_\ast\) is \((Ps_i)^{\mathsf T}\). Let
	\(R\in\{-1,1\}^{n\times n}\) be the matrix whose \(i\)-th row is
	\(s_i^{\mathsf T}\). Since \(P=P^{\mathsf T}\), the \(i\)-th row of
	\(RP\) is
	\[
	s_i^{\mathsf T}P
	=
	(Ps_i)^{\mathsf T}.
	\]
	Therefore,
	\[
	X_\ast=RP.
	\]
	
	Lemma~\ref{lem:projected-sign} now gives
	\[
	\Phi(X_\ast)\ge0,
	\]
	contradicting \(\Phi(X_\ast)<0\). Hence \(\Phi\) is nonnegative on
	the set of admissible matrices. In particular,
	\[
	\Phi(D)\ge0
	\]
	for the matrix \(D\) in the statement, which is exactly
	\eqref{eq:frame-gap}.
\end{proof}

The next elementary lemma converts spectral variance into a lower
bound for the sum of reciprocal eigenvalues.

\begin{lem}\label{lem:reciprocal-variance}
	Let \(r\ge2\), let
	\(\lambda_1,\ldots,\lambda_r>0\), and define
	\[
	T=\sum_{i=1}^r\lambda_i,
	\qquad
	\overline{\lambda}=\frac{T}{r},
\qquad
	V
	=
	\sum_{i=1}^r
	(\lambda_i-\overline{\lambda})^2.
	\]
	Then
	\begin{equation}\label{eq:reciprocal-variance}
		\sum_{i=1}^r\frac1{\lambda_i}
		\ge
		\frac{r^2}{T}
		+
		\frac{V}{
			\overline{\lambda}^{\,2}
			\left(
			\overline{\lambda}
			+\sqrt{\frac{r-1}{r}\,V}
			\right)}.
	\end{equation}
\end{lem}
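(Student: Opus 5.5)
Write $\lambda_i=\overline{\lambda}+t_i$, so that $\sum_i t_i=0$ and $\sum_i t_i^2=V$. The starting point is the exact identity
\[
\frac1{\lambda_i}=\frac1{\overline{\lambda}}-\frac{t_i}{\overline{\lambda}^{\,2}}+\frac{t_i^2}{\overline{\lambda}^{\,2}\lambda_i},
\]
which is checked by clearing denominators: $\overline{\lambda}^{\,2}=\lambda_i\overline{\lambda}-t_i\lambda_i+t_i^2$, and both sides equal $(\overline{\lambda}+t_i)\overline{\lambda}-t_i(\overline{\lambda}+t_i)+t_i^2$. Summing over $i$ and using $\sum t_i=0$ and $r/\overline{\lambda}=r^2/T$ gives
\[
\sum_{i=1}^r\frac1{\lambda_i}=\frac{r^2}{T}+\frac1{\overline{\lambda}^{\,2}}\sum_{i=1}^r\frac{t_i^2}{\lambda_i}.
\]
This reduces \eqref{eq:reciprocal-variance} to the bound
\[
\sum_i \frac{t_i^2}{\lambda_i}\ge \frac{V}{\overline{\lambda}+\sqrt{\frac{r-1}{r}V}}.
\]

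For this it suffices to bound every $\lambda_i$ from above by $\overline{\lambda}+\sqrt{\frac{r-1}{r}V}$. Then $\sum_i t_i^2/\lambda_i\ge \sum_i t_i^2/\lambda_{\max}=V/\lambda_{\max}$, and all quantities involved are positive. The upper bound on $\lambda_{\max}$ is the same Cauchy--Schwarz step used in the proof of Lemma~\ref{lem:frame-gap}. If $\eta=\lambda_{\max}-\overline{\lambda}\ge0$, the other $r-1$ deviations sum to $-\eta$, so their squares sum to at least $\eta^2/(r-1)$. Hence $V\ge \eta^2\, r/(r-1)$, that is, $\eta\le\sqrt{\frac{r-1}{r}V}$. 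The hypothesis $r\ge2$ is what makes $r-1\ge1$, so this step is legitimate.

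Combining the identity with this bound yields \eqref{eq:reciprocal-variance} directly. No step is a serious obstacle. The only thing that needs care is choosing the exact second-order remainder identity: it is the choice that makes the variance term appear with the denominator $\lambda_i$, which can then be bounded uniformly. A cruder Taylor bound or a convexity estimate would lose the sharp constant.
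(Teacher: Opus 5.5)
Your proposal is correct and follows essentially the same route as the paper's proof. Both use the exact second-order remainder identity for $1/\lambda$ about $\overline{\lambda}$, followed by the Cauchy--Schwarz bound $\lambda_{\max}\le\overline{\lambda}+\sqrt{\frac{r-1}{r}V}$ applied to the remaining $r-1$ deviations.
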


\begin{proof}
	Let
$
	\lambda_{\max}=\max_{1\le i\le r}\lambda_i
$
	and set
$
	d=\lambda_{\max}-\overline{\lambda}.
$
	The remaining \(r-1\) deviations from
	\(\overline{\lambda}\) have sum \(-d\). The
	Cauchy--Schwarz inequality therefore gives
	\[
	V
	\ge
	d^2+\frac{d^2}{r-1}
	=
	\frac{r}{r-1}d^2.
	\]
	Thus
	\begin{equation}\label{eq:largest-eigenvalue-general}
		\lambda_{\max}
		\le
		\overline{\lambda}
		+
		\sqrt{\frac{r-1}{r}\,V}.
	\end{equation}
	
	For every \(\lambda>0\), the following identity is exact:
	\[
	\frac1\lambda
	=
	\frac1{\overline{\lambda}}
	-
	\frac{\lambda-\overline{\lambda}}
	{\overline{\lambda}^{\,2}}
	+
	\frac{(\lambda-\overline{\lambda})^2}
	{\overline{\lambda}^{\,2}\lambda}.
	\]
	Summing this identity over \(i\) and using
	\[
	\sum_{i=1}^r
	(\lambda_i-\overline{\lambda})=0,
	\]
	we obtain
	\begin{equation}\label{eq:reciprocal-identity}
		\sum_{i=1}^r\frac1{\lambda_i}
		=
		\frac{r^2}{T}
		+
		\sum_{i=1}^r
		\frac{
			(\lambda_i-\overline{\lambda})^2
		}{
			\overline{\lambda}^{\,2}\lambda_i
		}.
	\end{equation}
	
	Since \(\lambda_i\le\lambda_{\max}\),
	\[
	\sum_{i=1}^r
	\frac{
		(\lambda_i-\overline{\lambda})^2
	}{
		\overline{\lambda}^{\,2}\lambda_i
	}
	\ge
	\frac{V}{
		\overline{\lambda}^{\,2}\lambda_{\max}
	}.
	\]
	Combining this estimate with
	\eqref{eq:largest-eigenvalue-general} and
	\eqref{eq:reciprocal-identity} proves
	\eqref{eq:reciprocal-variance}.
\end{proof}

We can now complete the proof for the case  \(n\ge6\) in Theorem~\ref{thm:centered}.

\begin{prop}\label{prop:spectral-conversion}
	The conclusion of Theorem~\ref{thm:centered} holds for all even \(n\ge 6\).
\end{prop}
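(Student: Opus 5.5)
Write $G=D^{\mathsf T}D$, $T=\tr G$ and $V=\tr(G^2)-T^2/(n-1)$. The plan is to reduce the claim to a scalar inequality about the eigenvalues of $G$ on $\one^\perp$. We then split into two cases according to whether $T$ exceeds $n^2-1$, using Lemma~\ref{lem:frame-gap} to control the spectral spread in the second case.

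\textbf{Reduction to eigenvalues.} Since $D\one=0$ and $\rank D=n-1$, the matrix $G$ annihilates $\one$ and is positive definite on $\one^\perp$. Let its eigenvalues there be $\lambda_1,\ldots,\lambda_{n-1}>0$. Then $\|D^\dagger\|_F^2=\sum_{i=1}^{n-1}\lambda_i^{-1}$, so it suffices to show $\sum_i\lambda_i^{-1}\ge (n-1)/(n+1)$. The rows of $D$ lie in $\cK_n$, so Lemma~\ref{lem:row-polytope} gives $T=\sum_i\|d_i\|_2^2\le n^2$. Lemma~\ref{lem:frame-gap} applies to $D$ and yields
\[
V\ge c\,\bigl(T-(n^2-1)\bigr),\qquad c=\frac{n^2(n-2)}{n-1}.
\]
Moreover $V=\sum_i(\lambda_i-\overline\lambda)^2$ with $\overline\lambda=T/(n-1)$.

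\textbf{Case $T\le n^2-1$.} The arithmetic--harmonic mean inequality gives
\[
\sum_i\lambda_i^{-1}\ge\frac{(n-1)^2}{T}\ge\frac{(n-1)^2}{n^2-1}=\frac{n-1}{n+1},
\]
which is the claim.

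\textbf{Case $T=n^2-1+\varepsilon$ with $0<\varepsilon\le1$.} Here the arithmetic--harmonic bound falls short of the target by exactly
\[
\frac{(n-1)^2}{n^2-1}-\frac{(n-1)^2}{T}=\frac{(n-1)^2\varepsilon}{(n^2-1)\,T}\le\frac{\varepsilon}{(n+1)^2},
\]
using $T\ge n^2-1$. To recover this deficit we apply Lemma~\ref{lem:reciprocal-variance} with $r=n-1$. The correction term there has the form $V/(a+b\sqrt V)$ with $a,b>0$, and this is increasing in $V$, so we may substitute $V\ge c\varepsilon$.

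In the denominator we bound the quantities from above:
\begin{itemize}
\item $\overline\lambda\le n^2/(n-1)$, since $T\le n^2$;
\item $\sqrt{\tfrac{n-2}{n-1}\,c\varepsilon}\le n(n-2)/(n-1)$, since $\varepsilon\le1$;
\item hence $\overline\lambda+\sqrt{\tfrac{n-2}{n-1}V}\le\dfrac{n^2+n(n-2)}{n-1}=2n$ after the substitution.
\end{itemize}
Combining these, the gain is at least
\[
\frac{c\,\varepsilon\,(n-1)^2}{n^4\cdot 2n}=\frac{(n-1)(n-2)\,\varepsilon}{2n^3}.
\]
It therefore remains to verify
\[
\frac{(n-1)(n-2)}{2n^3}\ge\frac1{(n+1)^2},\quad\text{that is,}\quad (n-1)(n-2)(n+1)^2\ge 2n^3\quad\text{for } n\ge6.
\]
At $n=6$ the two sides are $980$ and $432$. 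The left side is a quartic with leading coefficient $1$, so the inequality persists for all larger $n$.

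\textbf{Main obstacle.} The only delicate step is choosing the crude bounds in the correction term so that the gain still beats the deficit uniformly in $\varepsilon$. The key is that both the gain and the deficit are linear in $\varepsilon$, so no fine optimization is needed; the hard work has already been done in Lemma~\ref{lem:frame-gap}.
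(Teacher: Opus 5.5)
Your proof is correct and follows the paper's argument essentially step for step: the same arithmetic--harmonic split at $T=n^2-1$, Lemma~\ref{lem:frame-gap} for the variance lower bound, Lemma~\ref{lem:reciprocal-variance} with monotonicity in $V$, and the crude bounds $T\le n^2$, $\varepsilon\le1$ in the denominator. The only differences are bookkeeping. The paper keeps $T$ in the comparison and reduces to $(n+1)(n-2)\ge 2n$, while you bound the deficit and gain separately and reduce to $(n-1)(n-2)(n+1)^2\ge 2n^3$; checking $n=6$ plus ``leading coefficient $1$'' does not by itself cover all $n\ge6$, but $(n-1)(n-2)\ge 2n$ for $n\ge5$ together with $(n+1)^2\ge n^2$ settles it immediately.
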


\begin{proof}
	Let \(D\) satisfy the hypotheses of
	Theorem~\ref{thm:centered}. Set
$
	r=n-1,
	G=D^{\mathsf T}D,
$
	and let \(\lambda_1,\ldots,\lambda_r>0\) be the nonzero
	eigenvalues of \(G\).
	Since \(\rank D=r\),
$
	\|D^\dagger\|_F^2
	=
	\sum_{i=1}^r1/\lambda_i.
$
	Define
	\[
	T=\sum_{i=1}^r\lambda_i
	=
	\tr G
	=
	\|D\|_F^2\qquad
\text{and}\qquad
	V
	=
	\sum_{i=1}^r
	\left(
	\lambda_i-\frac{T}{r}
	\right)^2.
	\]
	
	By Lemma~\ref{lem:row-polytope},
	\[
	\|d_i\|_2^2\le n
	\qquad (1\le i\le n),
	\]
	and hence
	\[
	T\le n^2.
	\]
	
	Suppose first that
$
	T\le n^2-1=r(n+1).
$
	The arithmetic--harmonic mean inequality gives
	\[
	\|D^\dagger\|_F^2
	=
	\sum_{i=1}^r\frac1{\lambda_i}
	\ge
	\frac{r^2}{T}
	\ge
	\frac{r}{n+1}.
	\]
	
	It remains to consider
$
	T>n^2-1.
$
	Write
$
		T=n^2-1+\varepsilon$, where $
		0<\varepsilon\le1.
$
Since \(D\one=0\) and \(\rank D=r=n-1\), the spectrum of \(G\) is
\[
0,\lambda_1,\ldots,\lambda_r.
\]
Hence
\[
V
=
\tr(G^2)-\frac{T^2}{n-1}.
\]
Moreover, the hypotheses on the rows of \(D\) imply that they belong
to \(\cK_n\). Therefore Lemma~\ref{lem:frame-gap} gives
	\begin{equation}\label{eq:variance-lower}
		V
		\ge
		\frac{n^2(n-2)}{r}\varepsilon.
	\end{equation}
		Set
$
	\overline{\lambda}=T/r.
$
	Lemma~\ref{lem:reciprocal-variance} gives
	\begin{equation}\label{eq:reciprocal-lower}
		\|D^\dagger\|_F^2
		\ge
		\frac{r^2}{T}
		+
		\frac{V}{
			\overline{\lambda}^{\,2}
			\left(
			\overline{\lambda}
			+\sqrt{\frac{r-1}{r}\,V}
			\right)}.
	\end{equation}
	For fixed \(\overline{\lambda}>0\), the function
	\[
	v\longmapsto
	\frac{v}{
		\overline{\lambda}
		+\sqrt{\frac{r-1}{r}\,v}
	}
	\]
	is increasing on \([0,\infty)\). Indeed, for \(v>0\), its derivative
	is
	\[
	\frac{
		\overline{\lambda}
		+\frac12\sqrt{\frac{r-1}{r}\,v}
	}{
		\left(
		\overline{\lambda}
		+\sqrt{\frac{r-1}{r}\,v}
		\right)^2
	}
	>0.
	\]
	We may therefore substitute the lower bound
	\eqref{eq:variance-lower} into the second term of
	\eqref{eq:reciprocal-lower}. Since \(r=n-1\),
	\[
	\sqrt{
		\frac{r-1}{r}
		\cdot
		\frac{n^2(n-2)}{r}\varepsilon
	}
	=
	\frac{n(n-2)}{r}\sqrt{\varepsilon}.
	\]
	It follows that
	\begin{equation}\label{eq:correction-lower}
		\frac{V}{
			\overline{\lambda}^{\,2}
			\left(
			\overline{\lambda}
			+\sqrt{\frac{r-1}{r}\,V}
			\right)}
		\ge
		\frac{
			n^2(n-2)\varepsilon r^2
		}{
			T^2
			\left(
			T+n(n-2)\sqrt{\varepsilon}
			\right)
		}.
	\end{equation}
	
	The difference between the desired bound and the first term in
	\eqref{eq:reciprocal-lower} is
	\begin{equation}\label{eq:reciprocal-deficit}
		\frac{r}{n+1}-\frac{r^2}{T}
		=
		\frac{r\varepsilon}{(n+1)T}.
	\end{equation}
	Therefore, it suffices to prove that
	\begin{equation}\label{eq:scalar-spectral}
		(n+1)n^2(n-2)r
		\ge
		T\left(
		T+n(n-2)\sqrt{\varepsilon}
		\right).
	\end{equation}
	Since \(T\le n^2\) and \(\varepsilon\le1\),
	\begin{align*}
		T\left(
		T+n(n-2)\sqrt{\varepsilon}
		\right)
		&\le
		n^2\bigl(n^2+n(n-2)\bigr)\\
		&=
		2n^3(n-1)\\
		&=
		2n^3r.
	\end{align*}
	On the other hand,
	\[
	(n+1)n^2(n-2)r
	\ge
	2n^3r,
	\]
	because
	\[
	(n+1)(n-2)\ge2n
	\qquad (n\ge4).
	\]
	This proves \eqref{eq:scalar-spectral}. Combining
	\eqref{eq:reciprocal-lower},
	\eqref{eq:correction-lower}, and
	\eqref{eq:reciprocal-deficit}, we obtain
	\[
	\|D^\dagger\|_F^2
	\ge
	\frac{r}{n+1}
	=
	\frac{n-1}{n+1}.
	\]
\end{proof}

\begin{proof}[Proof of Theorem~\ref{thm:centered}]
	For \(n=4\), the conclusion follows from
Proposition~\ref{prop:n-four}. For every even \(n\ge6\), it follows from
Proposition~\ref{prop:spectral-conversion}.
\end{proof}

\section{The even case}\label{s:4}
In this section, we prove the even-dimensional case of
Theorem~\ref{thm:main}.

With the centered estimate in Theorem~\ref{thm:centered} available, it remains to recover the
direction annihilated by row centering.  The following exact
decomposition isolates that one-dimensional contribution.

\begin{lem}\label{lem:inverse-decomposition}
Let \(A\in\R^{n\times n}\) be nonsingular, put
\[
 P=I_n-\frac1nJ_n,
 \qquad
 \mu=\frac1nA\one,
 \qquad
 D=2AP,
\]
and let \(u\) be a unit vector spanning \(\ker D^{\mathsf T}\).  Define
$
 t=u^{\mathsf T}\mu,
 w=\mu-tu.
$
Then \(t\ne0\), and
\begin{equation}\label{eq:inverse-decomposition}
 A^{-1}
 =2D^\dagger+
 \left(\frac1{nt}\one-\frac2tD^\dagger w\right)u^{\mathsf T}.
\end{equation}
Moreover,
\begin{equation}\label{eq:norm-decomposition}
 \|A^{-1}\|_F^2
 =4\|D^\dagger\|_F^2+\frac1{nt^2}
 +\frac4{t^2}\|D^\dagger w\|_2^2.
\end{equation}
\end{lem}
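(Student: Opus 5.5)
The plan is to write \(A\) as a rank-one perturbation of \(D/2\) and then check directly that the right-hand side of \eqref{eq:inverse-decomposition} is a right inverse of \(A\). Since \(AP=A-\frac1nA\one\one^{\mathsf T}=A-\mu\one^{\mathsf T}\), we have
\[
A=\tfrac12D+\mu\one^{\mathsf T}.
\]
Because \(A\) is nonsingular and \(P\) has rank \(n-1\), \(\rank D=n-1\). Moreover, \(\ker D=\Span\{\one\}\) and \(\operatorname{range}D=A(\one^\perp)\), so \(\ker D^{\mathsf T}=(A(\one^\perp))^\perp\) is one-dimensional and \(u\) is well defined up to sign.

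To show \(t\ne0\), argue by contradiction. If \(u^{\mathsf T}\mu=0\), then \(\operatorname{range}A=A(\one^\perp)+\Span\{A\one\}=\operatorname{range}D+\Span\{\mu\}\subseteq u^\perp\). This contradicts the surjectivity of \(A\).

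For the inverse formula, I will use the standard Moore--Penrose facts:
\[
DD^\dagger=I_n-uu^{\mathsf T},\qquad \operatorname{range}D^\dagger=\operatorname{range}D^{\mathsf T}=\one^\perp,\qquad \ker D^\dagger=\ker D^{\mathsf T}=\Span\{u\}.
\]
The middle fact gives \(\one^{\mathsf T}D^\dagger=0\), and the last gives \(D^\dagger u=0\). Let \(B\) be the right-hand side of \eqref{eq:inverse-decomposition}. I will multiply \(A=\frac12D+\mu\one^{\mathsf T}\) against each piece of \(B\):
\begin{align*}
A(2D^\dagger)&=DD^\dagger+2\mu\one^{\mathsf T}D^\dagger=I_n-uu^{\mathsf T},\\
A\Bigl(\tfrac1{nt}\one\Bigr)&=\tfrac1{nt}\bigl(\tfrac12D\one+n\mu\bigr)=\tfrac{\mu}{t},\\
A\Bigl(-\tfrac2tD^\dagger w\Bigr)&=-\tfrac1tDD^\dagger w-\tfrac2t\mu\,\one^{\mathsf T}D^\dagger w=-\tfrac1t(I_n-uu^{\mathsf T})w=-\tfrac{w}{t}.
\end{align*}
In the last line I use \(w\perp u\), which holds by the definition of \(w\). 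Adding the three pieces gives
\[
AB=I_n-uu^{\mathsf T}+\tfrac{\mu-w}{t}u^{\mathsf T}=I_n-uu^{\mathsf T}+uu^{\mathsf T}=I_n,
\]
since \(\mu-w=tu\). As \(A\) is square and nonsingular, \(B=A^{-1}\).

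For \eqref{eq:norm-decomposition}, put \(x=\frac1{nt}\one-\frac2tD^\dagger w\), so that \(A^{-1}=2D^\dagger+xu^{\mathsf T}\). The cross term vanishes: \(\langle 2D^\dagger,xu^{\mathsf T}\rangle=2x^{\mathsf T}D^\dagger u=0\). Since \(u\) is a unit vector, \(\|xu^{\mathsf T}\|_F^2=\|x\|_2^2\). Finally, \(\one\perp D^\dagger w\) because \(\operatorname{range}D^\dagger=\one^\perp\), so \(\|x\|_2^2=\frac{1}{nt^2}+\frac4{t^2}\|D^\dagger w\|_2^2\). Adding \(4\|D^\dagger\|_F^2\) gives \eqref{eq:norm-decomposition}.

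I expect no serious obstacle. The only delicate step is keeping track of which kernels and ranges of \(D^\dagger\) are used: \(\one^{\mathsf T}D^\dagger=0\) and \(D^\dagger u=0\). Both follow from the Penrose identities, since \(D^\dagger=D^\dagger DD^\dagger\) forces \(\operatorname{range}D^\dagger=\operatorname{range}(D^\dagger D)\), which by symmetry of \(D^\dagger D\) equals \(\operatorname{range}D^{\mathsf T}\), and dually for the kernel.
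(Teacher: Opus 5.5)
Your proof is correct and follows essentially the paper's route. You rewrite $A=\tfrac12D+\mu\one^{\mathsf T}$, derive $t\ne0$ from the fact that the range of $A$ would otherwise lie in $u^\perp$, verify the inverse formula by direct multiplication using the Penrose projector identities, and obtain the norm identity from pairwise Frobenius orthogonality. The only difference is cosmetic: you check the right inverse $AB=I_n$ (using $DD^\dagger=I_n-uu^{\mathsf T}$, $\one^{\mathsf T}D^\dagger=0$, and $w\perp u$), whereas the paper checks the left inverse $BA=I_n$ (using $D^\dagger D=P$, $u^{\mathsf T}D=0$, and $D^\dagger\mu=D^\dagger w$).
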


\begin{proof}
Since \(A\) is invertible and \(P\) has rank \(n-1\), \(D\) has rank
\(n-1\), \(\ker D=\Span\{\one\}\), and
\(\operatorname{range}D=u^\perp\).  If \(t=0\), then
\(\mu\in u^\perp=\operatorname{range}D\), and every column of
$
 A=1/2D+\mu\one^{\mathsf T}
$
would lie in \(\operatorname{range}D\), contradicting the
nonsingularity of \(A\).  Thus \(t\ne0\), and
\(w\in\operatorname{range}D\).

The standard Moore--Penrose identities in the present rank-one defect
setting are
\[
 D^\dagger D=P,
 \qquad
 DD^\dagger=I_n-uu^{\mathsf T},
 \qquad
 D^\dagger u=0.
\]
Multiplying the right-hand side of \eqref{eq:inverse-decomposition} by
\(A=D/2+\mu\one^{\mathsf T}\), and using
\(D^\dagger\mu=D^\dagger w\), gives
\[
 P+2D^\dagger w\one^{\mathsf T}
 +\frac1nJ_n-2D^\dagger w\one^{\mathsf T}=I_n.
\]
Thus the displayed matrix is a left inverse of \(A\). Since \(A\) is
square and nonsingular, this left inverse equals \(A^{-1}\), proving
\eqref{eq:inverse-decomposition}.

Finally,
\[
 \operatorname{range}(D^\dagger)
 =\operatorname{range}(D^{\mathsf T})
 =(\ker D)^\perp
 =\one^\perp,
\]
so \(D^\dagger w\perp\one\).  Also \(D^\dagger u=0\), which makes
\(D^\dagger\) Frobenius-orthogonal to every matrix of the form
\(zu^{\mathsf T}\).  More explicitly,
\[
 \langle D^\dagger,zu^{\mathsf T}\rangle_F
 =(D^\dagger u)^{\mathsf T}z=0,
 \qquad
 \langle\one u^{\mathsf T},(D^\dagger w)u^{\mathsf T}\rangle_F
 =\one^{\mathsf T}D^\dagger w=0.
\]
Thus the three terms in \eqref{eq:inverse-decomposition} are pairwise
Frobenius-orthogonal, and \eqref{eq:norm-decomposition} follows.
\end{proof}

The next identity measures simultaneously the loss under centering and
the distance from the binary cube.

\begin{lem}\label{lem:centered-energy}
Assume that \(0\le a_{ij}\le1\), and use the notation of
Lemma~\ref{lem:inverse-decomposition}.  Put
\[
 T=\|D\|_F^2,
 \qquad
 E=\sum_{i,j}a_{ij}(1-a_{ij}),
 \qquad
 q=\left\|\mu-\frac12\one\right\|_2^2,
 \qquad
 x=2\sqrt{nq}.
\]
Then
\begin{equation}\label{eq:energy-identity}
 n^2-T=x^2+4E.
\end{equation}
Furthermore,
\begin{equation}\label{eq:mu-bound}
 \|\mu\|_2\le\frac{n+x}{2\sqrt n}.
\end{equation}
\end{lem}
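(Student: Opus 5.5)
We compute \(T\) row by row and then reorganize the result as a sum over entries.

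\textbf{The identity \eqref{eq:energy-identity}.} Write \(a_i^{\mathsf T}\) for the \(i\)-th row of \(A\), so that \(\mu_i=\frac1n\one^{\mathsf T}a_i\) is the row mean. The \(i\)-th row of \(D=2AP\) is \(2(a_i-\mu_i\one)^{\mathsf T}\). Hence
\[
 T=4\sum_{i=1}^n\|a_i-\mu_i\one\|_2^2
 =4\sum_{i,j}a_{ij}^2-4n\sum_{i=1}^n\mu_i^2.
\]
Next use \(a_{ij}^2=a_{ij}-a_{ij}(1-a_{ij})\). This gives
\[
 \sum_{i,j}a_{ij}^2=n\sum_i\mu_i-E.
\]
Therefore
\[
 T=4n\sum_i\bigl(\mu_i-\mu_i^2\bigr)-4E.
\]
Now use \(\mu_i-\mu_i^2=\frac14-(\mu_i-\frac12)^2\). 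Summing over \(i\), the first term of \(T\) becomes \(4n\bigl(\frac n4-q\bigr)=n^2-4nq\). Since \(x^2=4nq\), this yields \(T=n^2-x^2-4E\), which is exactly \eqref{eq:energy-identity}. This step is routine algebra; the only point needing care is to keep the factor \(2\) in \(D=2AP\) and the normalization \(\mu=\frac1nA\one\) consistent.

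\textbf{The bound \eqref{eq:mu-bound}.} Apply the triangle inequality to \(\mu=\frac12\one+(\mu-\frac12\one)\):
\[
 \|\mu\|_2\le\tfrac12\|\one\|_2+\sqrt q=\frac{\sqrt n}{2}+\sqrt q .
\]
Since \(x=2\sqrt{nq}\), we have \(\sqrt q=\frac{x}{2\sqrt n}\). Also \(\frac{\sqrt n}{2}=\frac{n}{2\sqrt n}\). Adding these gives \(\|\mu\|_2\le\frac{n+x}{2\sqrt n}\), as claimed.

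There is no real obstacle in this lemma. The one detail to check is the bookkeeping that converts \(\sum a_{ij}^2\) into row means plus the binary defect \(E\).
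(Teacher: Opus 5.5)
Your proof is correct and takes essentially the same approach as the paper. You expand $T=4\sum_{i,j}a_{ij}^2-4n\|\mu\|_2^2$ rowwise, replace $\sum a_{ij}^2$ using $\sum a_{ij}=n\one^{\mathsf T}\mu$ and $E$, complete the square around $\frac12\one$, and obtain \eqref{eq:mu-bound} from the triangle inequality. The paper runs the same algebra in the reverse direction, expanding $4nq+4E$ and simplifying it to $n^2-T$.
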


\begin{proof}
Since \(D=2(A-\mu\one^{\mathsf T})\), rowwise expansion gives
\[
 T=4\sum_{i,j}a_{ij}^2-4n\|\mu\|_2^2.
\]
On the other hand,
\begin{align*}
 4nq+4E
 &=4n\|\mu\|_2^2-4n\one^{\mathsf T}\mu+n^2
   +4\sum_{i,j}a_{ij}-4\sum_{i,j}a_{ij}^2\\
 &=n^2-T,
\end{align*}
because \(\sum_{i,j}a_{ij}=n\one^{\mathsf T}\mu\).  This proves
\eqref{eq:energy-identity}.  The triangle inequality gives
\[
 \|\mu\|_2
 \le\frac{\sqrt n}{2}+\sqrt q
 =\frac{n+x}{2\sqrt n},
\]
which is \eqref{eq:mu-bound}.
\end{proof}

We can now prove strictness in every even dimension at least four.  The
two cases below are divided at \(x=1\), exactly where the centered
estimate and the elementary singular-value estimate meet.

\begin{prop}\label{prop:even-large}
Let \(n\ge4\) be even.  Under the hypotheses of
Theorem~\ref{thm:main},
\[
 \|A^{-1}\|_F>\frac{2n}{n+1}.
\]
\end{prop}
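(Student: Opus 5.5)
The plan is to combine the exact decomposition of Lemma~\ref{lem:inverse-decomposition} with the centered estimate of Theorem~\ref{thm:centered}, and to split into the cases \(x<1\) and \(x\ge1\), where \(x\) is the quantity from Lemma~\ref{lem:centered-energy}.

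\textbf{Applicability of Theorem~\ref{thm:centered}.} Put \(D=2AP=2(A-\mu\one^{\mathsf T})\). Then \(D\one=0\) and \(\rank D=n-1\). Each row of \(D\) is a translate of a row of \(2A\in[0,2]^{n\times n}\), so its oscillation is at most \(2\). Theorem~\ref{thm:centered} therefore gives \(\|D^\dagger\|_F^2\ge (n-1)/(n+1)\).

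\textbf{Reduction to a scalar inequality.} Drop the term \(\frac4{t^2}\|D^\dagger w\|_2^2\ge0\) in \eqref{eq:norm-decomposition}. Since \(t^2=(u^{\mathsf T}\mu)^2\le\|\mu\|_2^2\), the bound \eqref{eq:mu-bound} gives \(\frac1{nt^2}\ge \frac{4}{(n+x)^2}\). Hence
\[
\|A^{-1}\|_F^2\ge 4\|D^\dagger\|_F^2+\frac{4}{(n+x)^2}.
\]
The target is \(4n^2/(n+1)^2\). The key algebraic fact is
\[
\frac{n^2}{(n+1)^2}-\frac{n-1}{n+1}=\frac{1}{(n+1)^2}.
\]

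\textbf{Case \(x<1\).} Using Theorem~\ref{thm:centered}, we get
\[
\|A^{-1}\|_F^2\ge \frac{4(n-1)}{n+1}+\frac4{(n+x)^2}>\frac{4(n-1)}{n+1}+\frac4{(n+1)^2}=\frac{4n^2}{(n+1)^2},
\]
and the inequality is strict.

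\textbf{Case \(x\ge1\).} Here I use the elementary singular-value estimate instead of Theorem~\ref{thm:centered}. By \eqref{eq:energy-identity}, \(T=n^2-x^2-4E\le n^2-x^2\). The arithmetic--harmonic mean inequality on the \(n-1\) nonzero eigenvalues of \(D^{\mathsf T}D\) gives \(\|D^\dagger\|_F^2\ge (n-1)^2/T\ge (n-1)^2/(n^2-x^2)\). So \(\|A^{-1}\|_F^2\ge 4f(x)\), where
\[
f(x)=\frac{(n-1)^2}{n^2-x^2}+\frac1{(n+x)^2}.
\]
One checks that \(f(1)=n^2/(n+1)^2\). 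Moreover \(f'(x)>0\) on \([1,n)\), because that is equivalent to \(x(n-1)^2(n+x)>(n-x)^2\), which holds since \((n-1)^2\ge(n-x)^2\) for \(x\ge1\). Thus \(\|A^{-1}\|_F^2\ge 4n^2/(n+1)^2\), with strict inequality whenever \(x>1\).

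\textbf{Main obstacle: strictness at \(x=1\).} This is the only delicate point. Suppose equality held. Then every estimate in the chain is tight, which forces three things. First, \(E=0\), so \(A\) is a \(0\)--\(1\) matrix and \(D=(2A-J_n)P=RP\) with \(R\in\{-1,1\}^{n\times n}\). Second, \(T=n^2-1\). Third, the AM--HM step is an equality, so \(D^{\mathsf T}D=(n+1)P\). In the notation of Lemma~\ref{lem:projected-sign}, \eqref{eq:T-delta} gives \(\delta=1\). The computation of that lemma, namely \eqref{eq:N-square}, \eqref{eq:None-bound} and \eqref{eq:PS-square}, uses only the parity of \(n\) and not \(n\ge6\), so it also yields \eqref{eq:G-square-sign}:
\[
\tr(G^2)\ge n^3+2n^2-2n-\Bigl(\frac{n^2}{2}-1\Bigr).
\]
On the other hand \(G=(n+1)P\) gives \(\tr(G^2)=(n+1)^2(n-1)=n^3+n^2-n-1\). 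The lower bound exceeds this value by \(n^2/2-n+2>0\), which is a contradiction. Hence the inequality is strict in all cases.
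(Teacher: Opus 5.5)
Your proof is correct. It coincides with the paper's up to the last step: the same applicability check for Theorem~\ref{thm:centered}, the same split at $x=1$, and the same function $L=4f$ with the same monotonicity argument. The only divergence is how you rule out equality at $x=1$.

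The paper uses tightness of a different link in the same chain. Equality in $t^2\le\|\mu\|_2^2\le (n+x)^2/(4n)$ forces equality in the triangle inequality behind \eqref{eq:mu-bound}. So $\mu-\frac12\one$ is a nonnegative multiple of $\one$, and together with $q=1/(4n)$ this gives $\mu=\frac{n+1}{2n}\one$. Then every row of the binary matrix $A$ would sum to $(n+1)/2\notin\mathbb{Z}$. That is a one-line parity argument on row sums, using nothing beyond Lemma~\ref{lem:centered-energy}.

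You instead use tightness of the AM--HM step to get $D^{\mathsf T}D=(n+1)P$. You then re-run the second-moment computation of Lemma~\ref{lem:projected-sign} at $\delta=1$ and contradict $\tr(G^2)=(n+1)^2(n-1)$. Your arithmetic is right: the gap is $n^2/2-n+2>0$. One point needs to be stated explicitly. In the paper, \eqref{eq:N-square}, \eqref{eq:None-bound} and \eqref{eq:PS-square} are derived under the standing assumption $0\le\delta<1$. Their derivations use neither that assumption nor $n\ge6$, only the evenness of $N_{jk}$ and of $\rho=R\one$, so they do hold at $\delta=1$. You noted the $n\ge6$ point but not the $\delta<1$ one.

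As for what each route buys, yours shows something slightly different and more structural: no $\pm1$ matrix $R$ of even order satisfies $PR^{\mathsf T}RP=(n+1)P$, regardless of row sums. For the proposition itself, though, the paper's row-sum argument is shorter and self-contained. It does not reach back into the machinery of Section~\ref{s:3}.
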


\begin{proof}
Use the notation of Lemmas~\ref{lem:inverse-decomposition}
and~\ref{lem:centered-energy}.  Since \(P\one=0\), we have \(D\one=0\). If
\(a_i^{\mathsf T}\) and \(d_i^{\mathsf T}\) are the \(i\)-th rows of
\(A\) and \(D\), respectively, then subtracting the row mean does not
change oscillation, and hence
\[
\osc(d_i)=2\osc(a_i)\le2.
\]
Moreover, since \(A\) is nonsingular and \(\rank P=n-1\),
\[
\rank D=\rank(2AP)=n-1.
\]
Therefore Theorem~\ref{thm:centered} applies.

Suppose first that \(x<1\).  By \eqref{eq:norm-decomposition},
Theorem~\ref{thm:centered}, \(|t|\le\|\mu\|_2\), and
\eqref{eq:mu-bound},
\begin{align*}
 \|A^{-1}\|_F^2
 &\ge4\frac{n-1}{n+1}+\frac1{n\|\mu\|_2^2}\\
 &>4\frac{n-1}{n+1}+\frac4{(n+1)^2}
 =\frac{4n^2}{(n+1)^2}.
\end{align*}

Now assume \(x\ge1\).  Since \(E\ge0\) and
$
 T=\|D\|_F^2>0,
$
the identity
\[
 T=n^2-x^2-4E
\]
implies \(1\le x<n\).  The arithmetic--harmonic mean
inequality applied to the \(n-1\) nonzero squared singular values of
\(D\) gives
\[
 \|D^\dagger\|_F^2\ge\frac{(n-1)^2}{T}
 \ge\frac{(n-1)^2}{n^2-x^2}.
\]
Together with \eqref{eq:norm-decomposition}, \(|t|\le\|\mu\|_2\), and
\eqref{eq:mu-bound}, this yields
\begin{equation}\label{eq:Lx}
 \|A^{-1}\|_F^2
 \ge L(x):=
 \frac{4(n-1)^2}{n^2-x^2}+\frac4{(n+x)^2}.
\end{equation}
For \(1\le x<n\), direct differentiation shows that
\[
 L'(x)\ge0
 \quad\Longleftrightarrow\quad
 (n-1)^2x(n+x)\ge(n-x)^2.
\]
The last inequality is strict because its left-hand side is at least
\((n-1)^2(n+1)\), while its right-hand side is at most
\((n-1)^2\).  Therefore
\begin{equation}\label{eq:L-one}
 L(x)\ge L(1)
 =\frac{4(n-1)}{n+1}+\frac4{(n+1)^2}
 =\frac{4n^2}{(n+1)^2}.
\end{equation}

It remains to rule out equality in the second case.  If equality held
in the desired estimate, then equality would have to hold at every
step in \eqref{eq:Lx}--\eqref{eq:L-one}.  Since \(L\) is strictly
increasing on \([1,n)\), this first gives \(x=1\).  Equality in
\[
 T\le n^2-x^2
\]
then gives \(E=0\) by \eqref{eq:energy-identity}; hence every entry of
\(A\) is either \(0\) or \(1\).  Equality in \eqref{eq:mu-bound} forces
\(\mu-\one/2\) to be a nonnegative multiple of \(\one\).  Since
\(q=1/(4n)\), it follows that
\[
 \mu=\frac{n+1}{2n}\one.
\]
Thus every row of the binary matrix \(A\) would contain
\((n+1)/2\) ones, which is impossible because \(n\) is even.  Hence
equality cannot occur.
\end{proof}

The case \(n=2\) is handled directly.

\begin{lem}\label{lem:n-two}
If \(A\in[0,1]^{2\times2}\) is nonsingular, then
\[
 \|A^{-1}\|_F^2\ge2>\frac{16}{9}.
\]
\end{lem}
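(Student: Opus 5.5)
Write $A=\begin{pmatrix}a&b\\c&d\end{pmatrix}$ with entries in $[0,1]$ and set $\Delta=ad-bc\neq0$. The plan is to compute $\|A^{-1}\|_F^2$ explicitly and compare numerator and denominator. Since $A^{-1}=\Delta^{-1}\begin{pmatrix}d&-b\\-c&a\end{pmatrix}$, we have
\[
\|A^{-1}\|_F^2=\frac{a^2+b^2+c^2+d^2}{(ad-bc)^2}=\frac{\|A\|_F^2}{\Delta^2}.
\]
So it suffices to prove $\|A\|_F^2\ge 2\Delta^2$. The strict comparison $2>16/9$ is just arithmetic, and $16/9=4n^2/(n+1)^2$ at $n=2$.

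The key step is the bound $|\Delta|\le\max(ad,bc)$. Since all entries are nonnegative, $ad\ge0$ and $bc\ge0$, so $|ad-bc|\le\max(ad,bc)$. By symmetry we may assume $|\Delta|\le ad$. Because $a,d\in[0,1]$, we get $a^2\ge a^2d^2$ and $d^2\ge a^2d^2$. Hence
\[
\|A\|_F^2\ge a^2+d^2\ge 2a^2d^2\ge 2\Delta^2.
\]
Dividing by $\Delta^2>0$ gives $\|A^{-1}\|_F^2\ge2$. The case $|\Delta|\le bc$ is identical with $b,c$ in place of $a,d$.

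I do not expect a real obstacle here. The only point needing care is that nonnegativity of the entries is what gives $|ad-bc|\le\max(ad,bc)$; without it the determinant could be as large as $2$ and the bound would fail. As a sanity check on sharpness, equality $\|A^{-1}\|_F^2=2$ is attained, for example, at $A=I_2$, where $a^2+d^2=2=2\Delta^2$ and $b=c=0$. This confirms that in dimension $2$ the optimum $2$ is strictly above the conjectured value $16/9$, consistent with the nonexistence of an $S$-matrix of order $2$.
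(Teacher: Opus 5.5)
Your proof is correct, and it takes a different route from the paper's. Both arguments start from $\|A^{-1}\|_F^2=\|A\|_F^2/(\det A)^2$. The paper then splits according to the size of $F=\|A\|_F^2$:
\begin{itemize}
\item when $F\le 2$, it uses Hadamard's determinant inequality and AM--GM on the two squared row norms to get $(\det A)^2\le F^2/4\le F/2$;
\item when $F\ge 2$, it uses only the crude bound $|\det A|\le 1$, which holds because $a_{11}a_{22}$ and $a_{12}a_{21}$ both lie in $[0,1]$.
\end{itemize}
You instead split according to which diagonal product dominates. You use $|ad-bc|\le\max(ad,bc)$ and then the entrywise inequality $x^2\ge x^2y^2$ for $x,y\in[0,1]$.

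Your version is a single elementary chain that needs no appeal to Hadamard's inequality. Because each step is explicit, it also exposes the equality cases. If $ad$ dominates, equality forces $b=c=0$ and $a=d=1$, and symmetrically in the other case. So the extremizers are exactly the two $2\times 2$ permutation matrices, confirming that $2$ is the true minimum for $n=2$.

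The paper's version shows instead that the regime $F\le 2$ needs no sign or box constraint at all. The constraint $A\in[0,1]^{2\times2}$ enters only through the single fact $|\det A|\le 1$.

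Since the lemma only needs the strict comparison with $16/9$, either argument suffices.
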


\begin{proof}
Put \(F=\|A\|_F^2\).  For a \(2\times2\) matrix,
\[
 \|A^{-1}\|_F^2=\frac{F}{(\det A)^2}.
\]
If \(F\le2\), Hadamard's determinant inequality \cite[p.~505, Theorem~7.8.1]{HJ13} and the
arithmetic--geometric mean inequality applied to the squared row
norms give
\[
 (\det A)^2\le\frac{F^2}{4}\le\frac F2.
\]
If \(F\ge2\), then \(|\det A|\le1\), since
\(|a_{11}a_{22}-a_{12}a_{21}|\le1\), and again
\((\det A)^2\le F/2\).  The assertion follows.
\end{proof}

Next, we prove Theorem~\ref{thm:main}.

\begin{proof}[Proof of Theorem~\ref{thm:main}]
	For odd \(n\), including \(n=1\), the inequality and its equality
	statement follow from Proposition~\ref{prop:odd}. For even \(n\ge4\),
	Proposition~\ref{prop:even-large} gives strict inequality, while
	Lemma~\ref{lem:n-two} handles \(n=2\). Finally, \eqref{eq:Svalue}
	shows that every \(S\)-matrix attains equality. These statements
	together prove the theorem.
\end{proof}

\begin{rem}\label{rem:equality-existence}
For even \(n\ge2\), the proof gives strict inequality.  For odd \(n\),
equality is attained exactly when a Hadamard matrix of order \(n+1\)
exists.  Theorem~\ref{thm:main} therefore characterizes all possible equality
cases without asserting existence in the unresolved Hadamard orders.
\end{rem}

\enlargethispage{3\baselineskip}

\end{document}